\newtheorem{remark}{Remark}[section]
\newtheorem{assumption}{Assumption}[section]
\newcommand{\seminorm}[1]{\left|#1 \right|}
\renewcommand{\emph}[1]{{\it #1}}
\def\Vh{{W}_h}
\def\VE{W_{h|E}}
\def\VEt{\widetilde{V}_{h|E}}
\def\P{{\mathbb P}}
\def\Vhbc{{W}_h^0}
\title{A $C^1$ virtual element method for the Cahn-Hilliard equation with polygonal meshes}
\author{Paola F. Antonietti\thanks{MOX-Dipartimento di Matematica, Politecnico di Milano, E-mail: paola.antonietti@polimi.it} \and Lourenco Beir\~ao da Veiga\thanks{Dipartimento di Matematica,  
Universit\`a degli Studi di Milano, E-mail: lourenco.beirao@unimi.it} \and Simone Scacchi
\thanks{Dipartimento di Matematica,  Universit\`a degli Studi di Milano, E-mail: simone.scacchi@unimi.it}
\and Marco Verani \thanks{MOX-Dipartimento di Matematica, Politecnico di Milano, E-mail: marco.verani@polimi.it}}
\begin{document}

\maketitle

\begin{abstract}
In this paper we develop an evolution of the $C^1$ virtual elements of minimal degree for the approximation of the Cahn-Hilliard equation. 
The proposed method has the advantage of being conforming in $H^2$ and making use of a very simple set of degrees of freedom, namely 3 degrees of freedom per vertex of the mesh. 
Moreover, although the present method is new also on triangles, it can make use of general polygonal meshes.
As a theoretical and practical support, we prove the convergence of the semi-discrete scheme and investigate the performance of the fully discrete scheme through a set of numerical tests. 
\end{abstract}

\begin{keywords} Virtual element method, Cahn-Hilliard
\end{keywords}

\begin{AMS} 
65M99 
\end{AMS}

\pagestyle{myheadings}
\thispagestyle{plain}
\markboth{}{}

% -------------------------------------------------------------------
\section{Introduction}
\label{sec:intro}

The study of the evolution of transition interfaces, which is of paramount importance in many physical/biological phenomena and industrial processes, can be grouped into two macro classes, each one corresponding to a different method of dealing with the moving free-boundary: the sharp interface method and the phase-field method. In the sharp interface approach, the free boundary is to be determined together with the solution of suitable partial differential equations where proper jump relations have to imposed across the free boundary. In the phase field approach, the interface is specified as the level set of a smooth continuos function exhibiting large gradients across the interface.   

Phase field models, which date back to the works of Korteweg \cite{Korteweg}, Cahn and Hilliard \cite{Cahn:1961,Cahn-Hilliard:1958, Cahn-Hilliard:1959}, Landau and Ginzburg \cite{Landau-Ginzburg:1965} and van der Waals \cite{Waals:1979}, have been classicaly employed to describe phase separation in binary alloys. However, recently Cahn-Hilliard type equations have been extensively used in an impressive variety of applied problems, such as, among the others, tumor growth \cite{Wise-Lowengrub:2008, Oden-tumor:2010}, origin of Saturn's rings \cite{saturn}, separation of di-block copolymers \cite{copolymers}, population dynamics \cite{population}, image processing \cite{Bertozzi:2007b} and even clustering of mussels \cite{mussels}.  

Due to the wide spectrum of applications, the study of efficient numerical methods for the approximate solution of the Cahn-Hilliard equation has been the object of an intensive research activity.  
Summarizing the achievements in this field is a tremendous task that go beyond the scope of this paper. 
Here, we limit ourvselves to some remarks on finite element based methods, as the main properties (and limitations) of these schemes are instrumental to motivate the introduction of our new approach. 
As the Cahn-Hilliard equation is a fourth order nonlinear problem, a natural approach is the use of 
 $C^1$ finite elements (FEM) as in \cite{Elliott-Zheng:1986, Elliott-French:1987}. 
However, in order to avoid the well known difficulty met in the implementation of $C^1$ finite elements, another possibility is the use of non-conforming (see, e.g., \cite{Elliott-French:1989}) or discontinuous (see, e.g., \cite{Wells-Kuhl-Garikipati:2006}) methods; the drawback is that in such case the discrete solution will not satisfy a $C^1$ regularity. 
Alternatively, the most common strategy employed {\it in practice}  to solve the Cahn-Hilliard equation with (continuos and discontinuous) finite elements is to use mixed methods (see e.g. \cite{Elliott-French-Milner:1989, Elliott-Larsson:1992} and \cite{Kay-Styles-Suli:2009} for the continuous and discontinuous setting, respectively).  Clearly, the drawback of this approach is the increase of the numbers of degrees of freedom, and thus of the computational cost. Very recently, the difficulty related to the practical use of $C^1$ basis functions has been addressed with success also in the framework of isogeometric analysis \cite{Gomez-Hughes:2008}. 

In this paper, we introduce and analyze the $C^1$ virtual element method (VEM)  for the approximate solution of the Cahn-Hilliard equation. This newly introduced method (see, e.g., \cite{volley} for an introduction to the method and \cite{hitchhikers} for the details of its practical implementation) is characterized by the capability of dealing with very general polygonal/polyedral meshes and to possibility of easily implementing highly regular discrete spaces. Indeed, by avoiding the explicit construction of the local basis functions, the VEM can easily handle general polygons/polyhedrons without complex integrations on the element. In addition, thanks to this added flexibility, it was discovered \cite{kirchhoff,arbitrary} that virtual elements can also be used to build global discrete spaces of arbitrary regularity ($C^1$ and more) that are quite simple in terms of degrees of freedom and coding. Other virtual element contributions are, for instance \cite{Brezzi-Falk-Marini,ABMV14,VEM-elasticity,Berrone-VEM,hourglass,Paulino-VEM,VEM19,VemSteklov}, while for a very short sample of other FEM-inspired methods dealing with general polygons we refer to 
\cite{Bishop,Cockburn-IMU,DiPietro-Ern-1,Droniou-gradient,Gillette-2,TPPM10,Wachspress11,Wang-1}.

In the present contribution we develop a modification of the $C^1$ virtual elements (of minimal degree) of \cite{arbitrary}  for the approximation of the Cahn-Hilliard equation. Also taking inspiration from the enhancement techniques of \cite{projectors}, we define the virtual space in order to be able to compute three different projection operators, that are used for the construction of the discrete scheme. Afterwards, we prove the convergence of the semi-discrete scheme and investigate the performance of the fully discrete scheme numerically. We underline that, on our knowledge, this is the first application of the newborn virtual element technology to a nonlinear problem.\\

The paper is organized as follows. In Section~\ref{sec:2} we describe the proposed virtual element method. In Section
\ref{sec:error} we develop the theoretical error estimates. In Section~\ref{sec:num} we present the numerical tests.

% -------------------------------------------------------------------
\section{The continuous and discrete problems}
\label{sec:2}
% -------------------------------------------------------------------

In this section, after presenting the Cahn-Hilliard equation, we introduce the Virtual Element discretization.
The proposed strategy takes the steps from the $C^1$ methods described in \cite{kirchhoff,arbitrary} for the Kirchhoff and Poisson problems, respectively, combined with an enhancement strategy first introduced in \cite{projectors}. The present virtual scheme makes use of three different projectors and of a particular construction to take care of the nonlinear part of the problem.

% -------------------------------------------------------------------
\subsection{The continuous problem}
\label{sec:cont}
% -------------------------------------------------------------------
Let $\Omega\subset \mathbb{R}^2$ be an open bounded domain. Let $\psi(x)= (1-x^2)^2/4$ and let $\phi(x) = \psi^\prime (x)$,  we consider the following Cahn-Hilliard problem: 
find $u(x,t):\Omega \times [0,T] \rightarrow {\mathbb R}$ such that:
\begin{equation}\label{eq:CHforte}
\left\{
\begin{aligned}
& \partial_t u - \Delta \big( \phi(u) - \gamma^2 \Delta u(t) \big) = 0 
&& \textrm{ in } \Omega \times [0,T], \\
& u(\cdot, 0) = u_0(\cdot) 
&& \textrm{ in } \Omega, \\
& \partial_{\bf n} u = \partial_n \big( \phi(u) - \gamma^2 \Delta u(t) \big) = 0
&& \textrm{ on } \partial\Omega \times [0, T],
\end{aligned}
\right.
\end{equation}
where $\partial_{\bf n}$ denotes the (outward) normal derivative and $\gamma \in {\mathbb R}^+$, $0 < \gamma \ll 1$, represents the interface parameter. Throughout the paper we will employ the standard notation for Sobolev spaces \cite{Adams}.  We now introduce the variational form of \eqref{eq:CHforte} that will be used to derive the virtual element discretization. To this aim, we preliminary define the following bilinear forms 
\begin{equation*}
\begin{aligned}
& a^\Delta(v,w) = \int_\Omega (\nabla^2 v) : (\nabla^2 w) \, {\rm d}x 
&& \forall v,w \in H^2(\Omega), \\
& a^\nabla(v,w) = \int_\Omega (\nabla v) \cdot (\nabla w) \, {\rm d}x 
&& \forall v,w \in H^1(\Omega), \\
& a^0(v,w) = \int_\Omega v \, w \, {\rm d}x 
&& \forall v,w \in L^2(\Omega),
\end{aligned}
\end{equation*}
and the semi-linear form 
\begin{equation*}
r(z;v,w) = \int_\Omega \phi'(z) \nabla v \cdot \nabla w \, {\rm d}x 
\quad \forall z,v,w \in H^2(\Omega)
\end{equation*}
where all the symbols above follow a standard notation.
Finally, introducing the space
\begin{equation}\label{V}
V = \big\{ v \in H^2(\Omega) \: : \: \partial_{\bf n} u = 0 \textrm{ on } \partial \Omega \big\}
\end{equation}
the weak formulation of problem \eqref{eq:CHforte} reads as: find $u(\cdot,t) \in V$ such that
\begin{equation}\label{contpbl}
\left\{
\begin{aligned}
& a^0(\partial_t u,v) + { \gamma^2} a^\Delta(u,v) + r(u;u,v) = 0 \quad \forall v \in V ,   \\
& u(\cdot,0)=u_0(\cdot) . 
\end{aligned}
\right.
\end{equation}
In the theoretical analysis of Section \ref{sec:error}, we will work under the following regularity assumption on the solution of \eqref{contpbl}
\begin{equation}\label{CH-regularity}
u\in C^1(0,T;H^4(\Omega)\cap V),
\end{equation}
see, e.g., \cite{Temam:1989} for a possible proof under higher regularity hypotheses on the initial datum $u_0$.

% -------------------------------------------------------------------
\subsection{A $C^1$ Virtual Element space}
\label{sec:vem1}
% -------------------------------------------------------------------
In the present section we describe the virtual element space $\Vh\subset H^2(\Omega)$ that we will use in the next section to build a discretization of problem \eqref{contpbl}. From now on, we will assume that $\Omega$ is a polygonal domain in $\mathbb{R}^2$. \\

Our construction will need a few steps. Let $\Omega_h$ represent a decomposition of $\Omega$ into general polygonal elements $E$ of diameter $h_E$.
In the following, we will denote by $e$ the edges of the mesh $\Omega_h$ and, for all $e \in \partial E$, ${\bf n}_E^e$ will denote the unit normal vector to $e$ pointing outward to $E$. We will use the symbol $\P_k(\omega)$ to denote the space of polynomials of degree less than or equal to $k$ living on the set $\omega \subseteq {\mathbb R}^2$.
Finally, we will make use of the following local bilinear forms for all $E\in\Omega_h$
\begin{equation}\label{loc-forms-cont}
\begin{aligned}
a_E^\Delta(v,w) & = \int_E (\nabla^2 v) : (\nabla^2 w) \, {\rm d}x && \forall v,w \in H^2(E), \\
a_E^\nabla(v,w) & = \int_E (\nabla v) \cdot (\nabla w) \, {\rm d}x &&   \forall v,w \in H^1(E), \\
 a_E^0(v,w) &= \int_E v \, w \, {\rm d}x && \forall v,w \in L^2(E).
\end{aligned}
\end{equation}
Given an element $E\in\Omega_h$, the \emph{augmented} local space $\VEt$ is defined by
\begin{equation}\label{Vtilde}
\begin{aligned}
\VEt = \Big\{ v \in H^2(E) \: : \: \Delta^2 v \in \P_{2}(E) , \
& v_{|\partial E} \in C^0(\partial E) , v_{|e} \in \P_3(e) \ \forall e \in \partial E , \\
& \nabla v_{|\partial E} \in [C^0(\partial E)]^2 ,
\partial_{\bf n} v_{|e} \in \P_1(e) \ \forall e \in \partial E
\Big\} ,
\end{aligned}
\end{equation}
with $\partial_{\bf n}$ denoting the (outward) normal derivative.
The space $\VEt$ is made of functions that are continuous and piecewise cubic on the boundary, with continuous gradient on the boundary, normal linear component on each edge and such that its bilaplacian is a quadratic polynomial.

We now introduce two sets $D1$ and $D2$ of linear operators from $\VEt$ into $\mathbb{R}$. For all $v_h \in \VEt$ they are defined as follows:
\begin{itemize}
\item[$D1$] contains linear operators evaluating $v_h$ at the $n=n(E)$ vertexes of $E$;
\item[$D2$] contains linear operators evaluating $\nabla v_h$ at the $n=n(E)$ vertexes of $E$.
% \item[(D3)] All moments $\int_E \! v_h \: p \: {\rm d}x$, for all $p\in\P_{2}(E)$.
\end{itemize}
Note that, as a consequence of definition \eqref{Vtilde}, the output values of the two sets of operators $D1$ and $D2$ are sufficient to uniquely determine $v_h$ and $\nabla v_h$ on the boundary of $E$.

Let us now introduce  the projection operator $\Pi ^{\Delta}_E \colon \VEt \rightarrow \P_2(E)$, defined by
\begin{equation}\label{pi_delta}
\left\{
\begin{aligned}
& a_E^\Delta(\Pi^{\Delta}_E v_h, q) = a_E^\Delta(v_h,q) \quad \forall q \in \P_2(E) \\
& (\!( \Pi^{\Delta}_E v_h , q )\!)_E = (\!( v_h, q)\!)_E \quad \forall q \in \P_1(E) ,
\end{aligned}
\right.
\end{equation}
for all $v_h \in \VEt$ where $(\!( \cdot, \cdot )\!)_E$ represents an euclidean scalar product acting on the function vertex values, i.e. 
$$
(\!( v_h, w_h )\!)_E {=} \sum_{\stackrel{\nu \textrm{ vertexes}}{\textrm{of } \partial E}} 
\!\! v_h(\nu) \: w_h(\nu) \quad \forall v_h,w_h \in C^0(E) .
$$
Some explanation is in order to motivate the construction of the operator $\Pi^{\Delta}_E$. 
First, we note that the bilinear form $a_E^\Delta(\cdot,\cdot)$ has a non-trivial kernel, given by $\P_1(E)$. Hence, the role of the second condition in \eqref{pi_delta} is to select an element of the kernel of the operator. Moreover, it is easy to check that the operator $\Pi^{\Delta}_E$ is well defined, as for all $v_h \in \VEt$ it returns one (and only one) function $\Pi^{\Delta}_E v_h \in \P_2(E)$.
Second, it is crucial to remark that the operator $\Pi^{\Delta}_E$ is uniquely determined on the basis of the informations carried by  the linear operators in $D1$ and $D2$. Indeed, it is sufficient to perform a double integration by parts on the right hand side of \eqref{pi_delta}, which gives
$$
a_E^\Delta(v_h,q) = \int_E \nabla^2 v_h : \nabla^2 q {\rm d}x = 
\int_{\partial E}  \big( \nabla^2(q) {\bf n}_E^e \big) \cdot \nabla v_h {\rm d}s 
- \int_{\partial E}  v_h (\textrm{div} \nabla^2 q )\cdot {\bf n}_E^e ,
$$
and to observe that the above term on the right hand side only depends on the boundary values of $v_{h}$ and $\nabla v_h$. We note that the same holds for the right hand side of the second equation in  \eqref{pi_delta}, since it depends only on the vertex values of $v_h$.  
To conclude, as for any  $v_h \in \VEt$, the output values of the linear operators in $D1$ and $D2$ are sufficient to define $v_h$ and $\nabla v_h$ on the boundary, it turns out that the operator $\Pi^{\Delta}_E$ is uniquely determined on the basis of the evaluations performed by the linear operators in $D1$ and $D2$.

We are now ready to define our virtual local spaces
\begin{equation}\label{VE}
\VE = \big\{ v \in \VEt \: : \: \int_E \Pi^\Delta_E (v_h) \, q \,  {\rm d}x = \int_E v_h \, q \, {\rm d}x \quad \forall q \in \P_2 (E) \big\} .
\end{equation}
We observe that, since $\VE  \subset \VEt$, the operator $\Pi^{\Delta}_{E}$ is well defined on $\VE$ and computable only on the basis of the output values of the operators in $D1$ and $D2$.

Moreover, we have the following result.
%%%%%%%%
\begin{lemma}
The set of operators $D1$ and $D2$ constitutes a set of degrees of freedom for the space $\VE$.
\end{lemma}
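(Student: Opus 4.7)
The plan is to verify the two ingredients that together make $D1\cup D2$ a set of degrees of freedom for $\VE$: that $\dim \VE = |D1|+|D2| = 3n$, and unisolvence, i.e., that any $v_h \in \VE$ annihilating every functional in $D1 \cup D2$ is identically zero. I will prove unisolvence explicitly; a small variant of the same computation delivers the dimension count.

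For the dimension of the augmented space $\VEt$, I invoke the biharmonic splitting: well-posedness of the biharmonic Dirichlet problem implies that $v_h \in \VEt$ is uniquely determined by the triple $(v_h|_{\partial E},\,\partial_{\bf n} v_h|_{\partial E},\,\Delta^2 v_h)$. The $C^0$ piecewise-cubic trace $v_h|_{\partial E}$ is fixed by the $n$ vertex values of $v_h$ together with the $n$ tangential components of $\nabla v_h$ at the vertices (Hermite cubic on each edge), and the continuous per-edge-linear normal derivative $\partial_{\bf n} v_h|_{\partial E}$ is fixed by the $n$ normal components of $\nabla v_h$ at the vertices; these are exactly the data collected by $D1\cup D2$. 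Together with $\dim \P_2(E)=6$ interior parameters from $\Delta^2 v_h$, this gives $\dim \VEt = 3n+6$. The enhancement defining $\VE$ imposes $6$ linear constraints on $\VEt$, and (by the unisolvence argument below restricted to the six-dimensional ``interior'' subspace) they are independent, so $\dim \VE = 3n$.

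For unisolvence, assume $v_h \in \VE$ kills all of $D1\cup D2$. By the trace characterization above, $v_h=0$ and $\nabla v_h=0$ on $\partial E$. The double integration by parts displayed right after \eqref{pi_delta} shows that $a_E^\Delta(v_h,q)$ depends only on the boundary traces of $v_h$ and $\nabla v_h$, hence $a_E^\Delta(\Pi^\Delta_E v_h,q)=a_E^\Delta(v_h,q)=0$ for every $q\in\P_2(E)$, which forces $\Pi^\Delta_E v_h\in\P_1(E)$. The second equation in \eqref{pi_delta} then says $\Pi^\Delta_E v_h$ agrees with $v_h$ at all vertices of $E$, hence vanishes there; a linear polynomial vanishing at the vertices of a polygon is identically zero, so $\Pi^\Delta_E v_h=0$. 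The enhancement condition in \eqref{VE} reduces to $\int_E v_h\, q\,{\rm d}x = 0$ for every $q \in \P_2(E)$. Taking $q=\Delta^2 v_h \in \P_2(E)$ and integrating by parts twice (both boundary integrals vanish because $v_h=0$ and $\nabla v_h=0$ on $\partial E$) yields $\int_E (\Delta v_h)^2\,{\rm d}x = 0$, so $\Delta v_h\equiv 0$ in $E$; combined with $v_h|_{\partial E}=0$, uniqueness for the homogeneous Laplace--Dirichlet problem gives $v_h \equiv 0$.

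The main delicate point I expect is the boundary-parameter bookkeeping needed for $\dim \VEt = 3n+6$, namely matching the $n$ vertex values and $2n$ vertex-gradient components with the Hermite cubic trace ($4$ scalars per edge) and the linear normal derivative ($2$ scalars per edge) and then invoking biharmonic well-posedness on an arbitrary polygon. Everything on the interior side is either standard or already recorded in the integration-by-parts identity displayed in the excerpt.
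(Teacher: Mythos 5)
Your proof is correct and follows essentially the same route as the paper: the biharmonic dimension count for $\VEt$ followed by the unisolvence computation (boundary data vanish, hence $\Pi^{\Delta}_E v_h = 0$, hence $L^2$-orthogonality of $v_h$ to $\P_2(E)$, then testing with $q=\Delta^2 v_h$ and integrating by parts twice to get $\Delta v_h = 0$). The only differences are cosmetic: the paper sidesteps proving independence of the six enhancement constraints by combining the lower bound $\dim\VE \ge \#\{D1\}+\#\{D2\}$ with injectivity of the degree-of-freedom map, and your assertion that the second equation in \eqref{pi_delta} makes $\Pi^{\Delta}_E v_h$ agree with $v_h$ at every vertex is not literally what that (averaged) condition states --- the correct justification is to test it with $q=\Pi^{\Delta}_E v_h\in\P_1(E)$ itself, which yields $\sum_\nu \big(\Pi^{\Delta}_E v_h(\nu)\big)^2=0$ and hence the vanishing at the vertices you need.
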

%%%%%%%%
\begin{proof} 
We start by noting that the space $\VEt$ is associated to a well posed biharmonic problem on $E$ with Dirichlet boundary data and standard volume loading, i.e., 
$$
\left\{
\begin{aligned}
& - \Delta^2 v_h \textrm{ assigned in } E , \\
& v_{h} \textrm{ and } \partial_{\bf n} v_{h} \textrm{ assigned on } \partial E. 
\end{aligned}
\right.
$$ 
Thus the dimension of $\VEt$ equals the dimension of the data space (loading and boundary data spaces). 
We now recall that, as already noted, the operators $D1$ and $D2$ uniquely determine $v_h$ and $\nabla v_h$ on the boundary of $E$ and thus the cardinality
$
\# \{ D1  \}  + \# \{  D2 \} 
$
exactly corresponds to the dimension of the boundary data in the above biharmonic problem. Therefore, since the loading data space has dimension equal to $\textrm{dim}(\P_2(E))$, we have
$$
\textrm{dim}(\VEt) = \# \{ D1  \}  + \# \{  D2 \}  + \textrm{dim}(\P_2(E)) .
$$
Now, we observe that the space $\VE$ is a subspace of $\VEt$ obtained by enforcing the constraints in \eqref{VE}, i.e a set of $n$ linear equations, with $n= \text{dim}(\P_2(E))$. Since such equations could, in principle, not be linearly independent, all we can say on the dimension of $\VE$ is
\begin{equation}
\textrm{dim}(\VE) \ge \textrm{dim}(\VEt) - \textrm{dim}(\P_2(E))  = \# \{ D1  \}  + \# \{  D2 \}  .
\end{equation}
The proof is therefore complete if we show that any $v_h \in \VE$ that vanishes on $D1$ and $D2$ is indeed the zero element of $\VE$. Let $v_h \in \VE$ vanish on 
$D1$ and $D2$. First of all, this easily implies that $v_h$ and $\nabla v_h$ are null on the boundary $\partial E$. Moreover, since the operator $\Pi^{\Delta}_E$ is linear and depends only on the output values of the operators in $D1$ and $D2$, it must hold $\Pi^{\Delta}_E (v_h) = 0$. Recalling definition \eqref{VE}, this in turn yields
\begin{equation}\label{X1}
\int_E v_h \, q \, {\rm d}x = 0 \quad \forall q \in \P_2 (E) .
\end{equation}
Since $v_h \in \VE \subseteq \VEt$, we have $\Delta^2 v_h \in \P_2(E)$. Therefore, we can take $q=\Delta^2 v_h$ as a test function in \eqref{X1}. A double integration by parts, using also that $v_h$ and $\nabla v_h$ are null on $\partial E$, then gives
$$
0 = \int_E v_h \, \Delta^2 v_h \, {\rm d}x = \int_E  \Delta v_h \, \Delta v_h \, {\rm d}x .
$$
Thus $\Delta v_h =0$ and the proof is complete by recalling again the boundary conditions on $v_h$.
\end{proof}

The space $\VE$ satisfies also the following properties.
The first one is that 
$$
\P_2(E) \subseteq \VE ,
$$
that will guarantee the good approximation properties for the space. The above inclusion is easy to verify, since clearly $\P_2(E) \subseteq \VEt$ and the additional condition in \eqref{VE} is satisfied by $\P_2(E)$ polynomials (being $\Pi^\Delta_E$ a projection on such polynomial space).
The second property is that also the standard $L^2$ projection operator 
$\Pi^0_{E}:  \VE \rightarrow \P_2(E)$ is computable (only) on the basis of the values of the degrees of freedom $D1$ and $D2$. 
Indeed, for all $v_h\in\VE$, the function $\Pi^0_{E} v_h \in \P_2(E)$ is defined by
\begin{equation}\label{pi_zero}
a_E^0(\Pi^{0}_E v_h, q) = a_E^0(v_h,q) \quad \forall q \in \P_2(E) ,
\end{equation}
where the bilinear form $a_E^0(\cdot,\cdot)$ , introduced in \eqref{loc-forms-cont}, represents the $L^2(E)$ scalar product.
Due to the particular property appearing in definition \eqref{VE}, the right hand side in \eqref{pi_zero} is computable using $\Pi^{\Delta}_E v_h$, and thus $\Pi^{0}_E v_h$ depends only on the values of the degrees of freedom $D1$, $D2$ attained by $v_h$ and $\nabla v_h$.
Actually, it is easy to check that on the space $\VE$ the projectors  $\Pi^\Delta_E$ and $\Pi^{0}_E$ are the same operator (although for the sake of clarity we prefer to keep the notation different).

We introduce an additional projection operator that we will need in the following. We define $\Pi^\nabla_{E}:  \VE \rightarrow \P_2(E)$ by
\begin{equation}\label{pi_nabla}
\left\{
\begin{aligned}
& a_E^\nabla(\Pi^{\nabla}_E v_h, q) = a_E^\nabla(v_h,q) \quad \forall q \in \P_2(E) \\
& \int_E \Pi^{\nabla}_E v_h \, {\rm d}x = \int_E v_h \, {\rm d}x .
\end{aligned}
\right.
\end{equation}
We remark that, since the bilinear form $a_E^\nabla(\cdot,\cdot)$ has a non trivial kernel (given by the constant functions) we added a second condition in order to keep the operator $\Pi^{\nabla}_E$ well defined.
It is easy to check that the right hand side in \eqref{pi_nabla} is computable on the basis of the values of the degrees of freedom $D1$ and $D2$. For the first equation in \eqref{pi_nabla}, this can be shown with an integration by parts (similarly as already done for the $\Pi^{\Delta}_E$ projector) 
$$
\int_E \nabla v_h \cdot \nabla q {\rm d}x = 
- (\Delta q)_{|E} \int_{E} v_h {\rm d}x 
+ \int_{\partial E}  v_h \, \partial_{\bf n} q \, {\rm d} s
$$
and noting that the identity
$$
\int_E v_h {\rm d}x = \int_E \Pi^{0}_E  v_h {\rm d}x
$$
allows to compute the integral of $v_h$ on $E$ using only the values of the degrees of freedom $D1$ and $D2$. For the ease of the reader, we summarize what we have accomplished so far in the following remark.
\begin{remark}
We have introduced a set of local spaces $\VE$ (well defined on general polygons and containing $\P_2(E)$) and the associated local degrees of freedom. We have moreover shown that we have three different projection operators (each one associated to a different bilinear form appearing in the problem) that can be computed making use only of the values of such degrees of freedom. 
\end{remark}

The global discrete space can now be assembled in the classical finite element fashion, yielding
$$
\Vh = \big\{ v \in V \: : \: v|_E \in \VE \quad \forall E\in\Omega_h \big\} .
$$
Note that, by gluing in the standard way the degrees of freedom, the ensuing functions will have continuous values and continuous gradients across edges. Therefore the resulting space is indeed contained in $H^2(\Omega)$ and will yield a conforming solution. The \emph{global degrees of freedom} will simply be 
\begin{itemize}
\item Evaluation of $v_h$ at the vertexes of the mesh $\Omega_h$;
\item Evaluation of $\nabla v_h$ at the vertexes of the mesh $\Omega_h$.
\end{itemize}
Thus the dimension of $\Vh$ is three times the number of vertexes in the mesh. As a final note we observe that, in practice, it is recommended to scale the degrees of freedom $D2$ by some local characteristic mesh size $h_\nu$ in order to obtain a better condition number of the final system.

% -------------------------------------------------------------------
\subsection{Virtual forms}
\label{sec:vem2}
% -------------------------------------------------------------------

The second key step in the contruction of the method is the definition of suitable discrete forms.
Analogously to the finite element case, this forms will be constructed element by element and will depend on the degrees of freedom of the discrete space. Unlike in the finite element case, this forms will not be obtained by some Gauss integration of the shape functions (that are unknown inside the elements) but rather using the projection operators that we defined in the previous section.

We start by introducing a discrete approximation of the three exact local forms in \eqref{loc-forms-cont}. By making use of the projection operators of the previous section, the development of the bilinear forms follows a standard approach in the virtual element literature. We therefore refer, for instance, to \cite{volley} for more details and motivations regarding this construction.
Let $E\in\Omega_h$ be any element of the polygonal partition. We introduce the following (strictly) positive definite bilinear form on $\VE\times \VE$
$$
s_E(v_h,w_h) = \sum_{\stackrel{\nu \textrm{ vertexes}}{\textrm{of } \partial E}} 
\!\! \Big( v_h(\nu) \: w_h(\nu) + (h_\nu)^2 \: \nabla v_h(\nu) \cdot \nabla w_h(\nu) \Big)
\quad \forall v_h,w_h \in \VE ,
$$
where $h_\nu$ is some characteristic mesh size lenght associated to the node $\nu$ (for instance the maximum diameter among the elements having $\nu$ as a vertex). 

Recalling \eqref{loc-forms-cont}, we then propose the following discrete (and symmetric) local forms
\begin{equation}\label{loc-discr-forms}
\begin{aligned}
& a_{h,E}^\Delta(v_h, w_h) = a_{E}^\Delta(\Pi^\Delta_E v_h, \Pi^\Delta_E w_h) 
+ h_E^{-2} s_E(v_h - \Pi^\Delta_E v_h , w_h - \Pi^\Delta_E w_h) , \\
& a_{h,E}^\nabla(v_h, w_h) = a_{E}^\nabla(\Pi^\nabla_E v_h, \Pi^\nabla_E w_h) 
+ s_E(v_h - \Pi^\nabla_E v_h , w_h - \Pi^\nabla_E w_h) , \\
& a_{h,E}^0(v_h, w_h) = a_{E}^0(\Pi^0_E v_h, \Pi^0_E w_h) 
+ h_E^2 \, s_E(v_h - \Pi^0_E v_h , w_h - \Pi^0_E w_h) , 
\end{aligned}
\end{equation}
for all $v_h,w_h \in \VE$. 

The consistency of the discrete bilinear forms is assured by the first term on the right hand side of each relation, while the role of the second term $s_E(\cdot,\cdot)$ is only to guarantee the correct coercivity properties.
Indeed, noting that the projection operators appearing above are always orthogonal with respect to the associated bilinear form, it is immediate to check the following consistency lemma. 
\begin{lemma}[consistency]\label{lem:cons}
For all the three bilinear forms in \eqref{loc-discr-forms} it holds
$$
a_{h,E}^\dagger (p,v_h) = a_{E}^\dagger (p,v_h) \quad \forall p \in \P_2(E) , \ \forall v_h \in \VE ,
$$
where the symbol $\dagger$ stands for the symbol $\Delta,\nabla$ or $0$.
\end{lemma}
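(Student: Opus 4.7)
The plan is to exploit two structural facts shared by all three projectors: polynomial preservation on $\P_2(E)$, and the defining Galerkin orthogonality with respect to the corresponding bilinear form. Both are immediate consequences of the definitions \eqref{pi_delta}, \eqref{pi_zero}, \eqref{pi_nabla}, but they are exactly what is needed to kill the two discrete contributions separately.

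First I would show that $\Pi^\dagger_E p = p$ for every $p \in \P_2(E)$ and every $\dagger \in \{\Delta,\nabla,0\}$. Indeed, since $p$ itself lies in the range $\P_2(E)$ and satisfies trivially both the Galerkin condition (testing the form against $\P_2$) and the auxiliary condition selecting the representative in the kernel (the vertex-product condition for $\Pi^\Delta_E$, the integral-mean condition for $\Pi^\nabla_E$, and no constraint for $\Pi^0_E$ which has trivial kernel), uniqueness of the projection forces $\Pi^\dagger_E p = p$. In particular $p - \Pi^\dagger_E p = 0$, so the stabilizing term $s_E(p - \Pi^\dagger_E p,\, v_h - \Pi^\dagger_E v_h)$ vanishes identically in the three formulas of \eqref{loc-discr-forms}.

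Second I would handle the consistent piece $a_E^\dagger(\Pi^\dagger_E p, \Pi^\dagger_E v_h)$. Using $\Pi^\dagger_E p = p$ this becomes $a_E^\dagger(p, \Pi^\dagger_E v_h)$. Now I invoke the first (Galerkin) defining equation of each projector, which states $a_E^\dagger(\Pi^\dagger_E v_h, q) = a_E^\dagger(v_h, q)$ for every $q \in \P_2(E)$. Choosing $q = p$ and using the symmetry of the bilinear forms yields $a_E^\dagger(p, \Pi^\dagger_E v_h) = a_E^\dagger(p, v_h)$. Combining the two steps gives $a_{h,E}^\dagger(p, v_h) = a_E^\dagger(p, v_h)$ as required.

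There is really no obstacle: the only subtle point is checking that the auxiliary conditions in \eqref{pi_delta} and \eqref{pi_nabla}, which fix the element in the kernel of $a_E^\Delta$ and $a_E^\nabla$ respectively, are automatically satisfied when the test polynomial itself is taken as the projected function. I would state this once, uniformly for the three cases, and then the argument above runs line for line with $\dagger$ kept generic, so a single short display covers the lemma.
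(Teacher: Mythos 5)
Your argument is correct and is precisely the reasoning the paper has in mind: the paper omits the proof, remarking only that the lemma is "immediate to check" because the projectors are orthogonal with respect to the associated bilinear forms, and your two steps (polynomial invariance $\Pi^\dagger_E p = p$ killing the stabilization term, then Galerkin orthogonality with $q=p$ plus symmetry handling the consistency term) are exactly how that remark unwinds. No gaps.
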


The lemma above states that the bilinear forms are exact whenever one of the two entries is a polynomial in $\P_2(E)$. 
In order to present a stability result for the proposed discrete bilinear forms, we need some mesh regularity assumptions on the mesh sequence $\{ \Omega_h \}_h$. 
\begin{assumption}\label{meshass}
We assume that there exist positive constants $c_s$ and $c_s'$ such that every element $E\in \{ \Omega_h \}_h$ is star shaped with respect to a ball with radius $\rho \ge c_s h_E$ and every edge $e \in \partial E$ has at least length $h_e \ge c_s' h_E$.
\end{assumption}

Under the above mesh regularity conditions, we can show the following lemma. Since the proof is standard and based on a scaling argument, it is omitted.
\begin{lemma}[stability]\label{lem:stab}
Let Assumption~\ref{meshass} hold. There exist two positive constants $c_\star, c^\star$ independent of the element $E\in \{ \Omega_h \}_h$ such that
$$
c_\star \, a_{E}^\dagger (v_h,v_h) \le 
a_{h,E}^\dagger (v_h,v_h) \le
c^\star a_{E}^\dagger (v_h,v_h) 
\quad \forall v_h \in \VE ,
$$
where the symbol $\dagger$ stands for the symbol $\Delta,\nabla$ or $0$.
\end{lemma}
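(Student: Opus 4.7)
The plan is to combine a Pythagorean decomposition based on the $a_E^\dagger$-orthogonality built into the projector $\Pi_E^\dagger$ with a scaling/compactness argument that makes $h_E^{\alpha_\dagger}s_E(\cdot,\cdot)$ equivalent to $a_E^\dagger(\cdot,\cdot)$ on a suitable subspace; here $\alpha_\dagger\in\{-2,0,2\}$ denotes the exponent appearing in front of $s_E$ in \eqref{loc-discr-forms} for $\dagger\in\{\Delta,\nabla,0\}$, respectively. The first equation in each of \eqref{pi_delta}, \eqref{pi_zero}, \eqref{pi_nabla} gives $a_E^\dagger(v_h-\Pi_E^\dagger v_h,q)=0$ for every $q\in\P_2(E)$; choosing $q=\Pi_E^\dagger v_h\in\P_2(E)$ yields the Pythagorean identity
\[
a_E^\dagger(v_h,v_h)=a_E^\dagger(\Pi_E^\dagger v_h,\Pi_E^\dagger v_h)+a_E^\dagger(w_h,w_h),\qquad w_h:=v_h-\Pi_E^\dagger v_h.
\]
Comparing with \eqref{loc-discr-forms}, both sides of the lemma reduce to the single spectral equivalence
\[
c_\star\,a_E^\dagger(w_h,w_h)\;\le\;h_E^{\alpha_\dagger}\,s_E(w_h,w_h)\;\le\;c^\star\,a_E^\dagger(w_h,w_h)
\]
on the finite-dimensional subspace $\{w_h\in\VE\colon\Pi_E^\dagger w_h=0\}$, on which $a_E^\dagger$ is a norm (its polynomial kernel is contained in $\P_2(E)$ and is suppressed by the projection constraint) while $s_E$ is a norm on all of $\VE$ by the previous lemma.

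Next I would prove the spectral equivalence by a standard scaling argument. Rescaling $E$ to an element $\hat E$ of unit diameter, the change-of-variables rules $\hat\nabla=h_E\nabla$, $d\hat x=h_E^{-2}dx$, and (from Assumption~\ref{meshass}) $h_\nu\simeq h_E$ give $a_E^\dagger(v,v)=h_E^{\alpha_\dagger}a_{\hat E}^\dagger(\hat v,\hat v)$ and $s_E(v,v)\simeq\hat s_{\hat E}(\hat v,\hat v)$, so the exponent $\alpha_\dagger$ attached to $s_E$ is chosen precisely to cancel the scaling of $a_E^\dagger$. The problem thus reduces to the $h$-free equivalence $\hat s_{\hat E}\simeq a_{\hat E}^\dagger$ on the analogous subspace of $W_{h|\hat E}$, which in turn follows from the equivalence of norms on a finite-dimensional space.

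The substantive obstacle is the uniformity of $c_\star,c^\star$ across the mesh: unlike in standard FEM there is no single reference element, since the number and layout of vertices of $E$ can vary. This is handled by a compactness argument on the class of admissible polygonal shapes singled out by Assumption~\ref{meshass}. Star-shapedness with respect to a ball of radius $\ge c_s h_E$ together with the edge lower bound $\ge c_s' h_E$ force an \emph{a priori} bound on the number of vertices and, after rescaling to unit diameter, confine the admissible shapes to a compact family on which the equivalence constants depend continuously. Their extrema are therefore attained and are strictly positive and finite, yielding $h$-uniform $c_\star,c^\star$. This geometric uniformity, rather than the algebraic Pythagorean reduction, is the substantive content of the lemma and the reason Assumption~\ref{meshass} is imposed.
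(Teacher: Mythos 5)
The paper omits the proof of this lemma, stating only that it is ``standard and based on a scaling argument,'' and your proposal supplies exactly that argument --- the Pythagorean reduction via the $a_E^\dagger$-orthogonality of $\Pi_E^\dagger$ (valid for all three projectors, since in each case the first defining equation gives $a_E^\dagger(v_h-\Pi_E^\dagger v_h,q)=0$ for $q\in\P_2(E)$), the check that the exponents $h_E^{\alpha_\dagger}$ in \eqref{loc-discr-forms} exactly compensate the scaling of $a_E^\dagger$, and the shape-compactness step giving uniform constants --- so it is correct and essentially the intended proof. The only two points worth a remark, glossed over to the same degree the paper itself does, are that $h_\nu\simeq h_E$ requires the local quasi-uniformity implied by Assumption~\ref{meshass}, and that the continuity of the norm-equivalence constants with respect to the rescaled polygonal shape is the genuinely delicate step, since $\VE$ contains non-polynomial functions defined through a biharmonic problem on $E$.
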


Note that, as a consequence of the above lemma, it is immediate to check that the bilinear forms $a_{h,E}^\dagger(\cdot,\cdot)$ are continuous with respect to the relevant norm: $H^2$ for $\eqref{loc-discr-forms}_1$, $H^1$ for $\eqref{loc-discr-forms}_2$ and $L^2$ for $\eqref{loc-discr-forms}_3$.
The global discrete bilinear forms will be written (following the classical finite element procedure)
$$
a_h^\dagger(v_h,w_h) = \sum_{E\in\Omega_h} a_{h,E}^\dagger(v_h,w_h) 
\quad \forall v_h,w_h \in \Vh ,
$$
with the usual multiple meaning of the symbol $\dagger$.

\medskip

We now turn our attention to the semilinear form $r(\cdot;\cdot,\cdot)$, that we here write more explicitly:
$$
\begin{aligned}
& r(z;v,w) = \sum_{E\in\Omega_h} r_E(z;v,w) \quad \forall z,v,w \in H^2(\Omega) , \\
& r_E(z;v,w) = \int_E (3z(x)^2-1) \nabla v(x) \cdot \nabla w(x) \, {\rm d}x \quad \forall E\in\Omega_h .
\end{aligned}
$$
% $\phi''(x) = 3x^2-1 $
%%
On each element $E$, we approximate the term $w(x)^2$ with its average, computed using the $L^2(E)$ bilinear form $a_{h,E}^0(\cdot,\cdot)$:
$$
(w_h^2)|_E \simeq |E|^{-1} a_{h,E}^0(w_h,w_h) ,
$$
where $|E|$ denotes the area of element $E$. This approach will turn out to have the correct approximation properties and, moreover, it preserves the positivity of $w^2$.
We therefore propose the following approximation of the local nonlinear forms
$$
r_{h,E}(z_h;v_h,w_h) =\widehat{\phi^\prime (z_h)}_{\vert E} \: a_{h,E}^\nabla(v_h,w_h)
\quad \forall z_h,v_h,w_h \in \VE 
$$
where 
$ \widehat{\phi^\prime (z_h)}_{\vert E} {=} 3 |E|^{-1} a_{h,E}^0(z_h,z_h) - 1$.
The global form is then assembled as usual
$$
r_h(z_h;v_h,w_h) = \sum_{E\in\Omega_h} r_{h,E}(z_h;v_h,w_h)
\quad \forall w_h,r_h,v_h \in \Vh .
$$

% -------------------------------------------------------------------
\subsection{Discrete problem}
\label{sec:vem3}
% -------------------------------------------------------------------
%
We here outline the Virtual Element discretization of problem \eqref{contpbl}, that follows a Galerkin approach in space combined with a backward Euler in time.   
Let us introduce the space with boundary conditions
$$
\Vhbc = \Vh \cap V =  \big\{ v \in \Vh \: : \: \partial_{\bf n} u = 0 \textrm{ on } \partial \Omega \big\} .
$$
As usual, it is convenient to first introduce the semi-discrete problem:
\begin{equation}\label{discrpbl}
\left\{
\begin{aligned}
& \textrm{ Find } u_h(\cdot, t) \textrm{ in } \Vhbc  \textrm{ such that} \\
& a_h^0(\partial_t u_h,v_h) + { \gamma^2} a_h^\Delta(u_h,v_h) + r_h(u_h,u_h;v_h) = 0 
\quad \forall v_h \in \Vhbc , \ a.e. \textrm{ in } (0,T) ,   \\
& u_h(0,\cdot)=u_{0,h}(\cdot) ,
\end{aligned}
\right.
\end{equation}
with $u_{0,h} \in \Vhbc$ a suitable approximation of $u_0$ and where the discrete forms above have been introduced in the previous section.\\

In order to introduce the fully discrete problem, we subdivide the time interval $[0,T]$ into $N$ uniform sub-intervals of length $k=T/N$ by selecting, as usual, the time nodes $0=t_0 < t_1 < ... < t_{N-1} < t_N=T$.
We now search for $\{ u_{hk}^1, u_{hk}^2,...,u_{hk}^N \}$ with $u_{hk}^i \in \Vhbc$ representing the solution at time $t_i$.

The fully discrete problem reads as follows:  Given $u_{hk}^0=u_{0,h} \in \Vhbc$, for $i=1, \ldots, N$ look for $u_{hk}^i \in \Vhbc$ such that
\begin{equation}\label{fullydisc}
\begin{aligned}
& k^{-1} a_h^0(u_{hk}^{i}-u_{hk}^{i-1},v_h) + { \gamma^2} a_h^\Delta(u_{hk}^{i},v_h) + r_h(u_{hk}^{i},u_{hk}^{i};v_h) = 0
&& \forall v_h \in \Vhbc.
\end{aligned}
\end{equation}

% -------------------------------------------------------------------------------------
\section{Error analysis of the semi-discretization scheme}
\label{sec:error}
% --------------------------------------------------------------------------------------
Throughout the subsequent discussion, we will employ the notation $x \lesssim y$ to denote the inequality $x \leq C y $ being $C$ a positive constant independent of the discretization parameters but that may depend on the regularity of the underlying continuous solution. Moreover, note that (unless needed to avoid confusion) in the sequel the dependence of $u$ and $u_h$ on time $t$ is left implicit and the bounds involving $u$ or $u_h$ hold for all $t \in (0,T]$.

In this section we present the convergence analysis of the semidiscrete Virtual Element formulation given in \eqref{discrpbl}. Our theoretical analysis will deal only with the semi-discrete case since the main novelty of the present paper is the (virtual element) space discretization. The 
error analysis of the fully discrete scheme follows from the analysis of the semi-discrete case employing standard techniques as for in the classical finite element case (see, e.g, \cite{Thomee-book}).

The subsequent convergence analysis will be performed under the following well accepted regularity assumption on the semi-discrete solution $u_h$ of  \eqref{discrpbl} (see, e.g., \cite{Elliott-French:1989} for a discussion on its validity).
\begin{assumption}\label{ass:uh}
The solution $u_h$ of  \eqref{discrpbl} satisfies $$ u_h \in L^\infty(\Omega) \quad \forall t \in (0,T].$$
\end{assumption}

As a starting point, we recall the following approximation result, see \cite{DupontScott} and \cite{VemSteklov,volley}.
\begin{proposition}\label{prop:interpolation}
Assume that Assumption~\ref{meshass}
is satised. Then for every $v \in H^{s}(E)$
there exists $v_{\pi} \in \P_k(E)$, $k\geq 0$ and $v_I \in \VE$ such that
\begin{equation}\label{eq:interpolation}
\begin{aligned}
&\seminorm{v-v_{\pi}}_{H^{\ell}(E)}
\lesssim h_K^{s-\ell} \seminorm{v}_{H^{s}(E)},
&& 1\leq s\leq k+1 , \ \ell=0,1,\ldots, s,   \\
& \seminorm{v-v_I}_{H^{\ell}(E)}
\lesssim h_K^{s-\ell} \seminorm{v}_{H^{s}(E)}
&& s=2,3, \ \ell=0,1,\ldots, s, 
\end{aligned}
\end{equation}
where the hidden constant depends only on
$k$ and on the constants in Assumption~\ref{meshass}.
\end{proposition}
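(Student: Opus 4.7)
The proposition contains two distinct approximation estimates which I would establish by different (but classical) routes, then package as a single statement. The polynomial estimate is nothing but the standard averaged-Taylor approximation result on star-shaped domains, whereas the virtual-element interpolation estimate is a Bramble--Hilbert argument tailored to the degrees of freedom $D1$, $D2$.

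The plan for $v_\pi$ is to invoke Dupont--Scott directly. By Assumption~\ref{meshass}, every element $E$ is star-shaped with respect to a ball of radius comparable to $h_E$, so for any $v \in H^s(E)$ one can define $v_\pi$ to be the averaged Taylor polynomial of degree $k$ with respect to that ball. The classical result of \cite{DupontScott} then yields the first estimate in \eqref{eq:interpolation} with constants depending only on $k$ and on $c_s$.

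The plan for $v_I$ proceeds in two steps. First I would construct an interpolation operator $\mathcal{I}_E : H^s(E) \to \VE$ (with $s \geq 2$) by prescribing its values at the operators in $D1 \cup D2$ to coincide with those of $v$. Since $H^s(E) \hookrightarrow C^0(\overline{E})$ for $s \geq 2$ in two dimensions, vertex evaluations of $v$ are legitimate; for vertex evaluations of $\nabla v$ at $s=2$, the standard workaround (used, e.g., in \cite{volley,VemSteklov}) is to precede the nodal interpolation by a Clément-type local averaging, or equivalently to first apply a continuous extension into a smoother space, since point values of $\nabla v$ are only borderline well-defined at $s=2$. The operator $\mathcal{I}_E$ so constructed is well-defined by the lemma that established $D1 \cup D2$ as a set of degrees of freedom, and, by Assumption~\ref{meshass}, it is uniformly bounded (in the appropriate rescaled $H^s$-norm) via a standard pull-back to a reference polygon. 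Second, since $\P_2(E) \subseteq \VE$ and the operator reproduces all $\P_2$ polynomials, a Bramble--Hilbert argument gives
\[
\seminorm{v - v_I}_{H^{\ell}(E)} = \seminorm{(v - v_\pi) - \mathcal{I}_E(v - v_\pi)}_{H^\ell(E)} \lesssim h_E^{s-\ell}\seminorm{v}_{H^s(E)}
\]
for $s = 2, 3$ and $0 \le \ell \le s$, where $v_\pi$ is the polynomial approximation of the first part.

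The main obstacle is the verification of uniform stability of $\mathcal{I}_E$ on the reference polygon, since the shape functions in $\VE$ are not known explicitly. The argument must rely exclusively on the biharmonic problem characterization of $\widetilde{V}_{h|E}$ together with the enhancement constraint in \eqref{VE}: bounding $v_I$ in $H^s$ in terms of its boundary data and the $L^2$-projection of its bilaplacian, and then bounding those quantities by the values of the degrees of freedom. Once this reference-element estimate is in place, the scaling map transfers the bound to a general $E$ with constants depending only on $c_s$ and $c_s'$, and the rest of the proposition is routine. Since all of these ingredients are established in \cite{volley,VemSteklov} for very similar spaces, I would cite those references to cover the reference-element stability and focus the exposition on the Bramble--Hilbert step.
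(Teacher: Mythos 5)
Your plan is sound, but note that the paper does not actually prove Proposition~\ref{prop:interpolation}: it is recalled as a known result with pointers to \cite{DupontScott} for the polynomial estimate and to \cite{VemSteklov,volley} for the interpolation estimate, so there is no in-paper argument to compare against. Your reconstruction is consistent with what those references establish — averaged Taylor polynomials on star-shaped domains for $v_\pi$, and a nodal interpolant determined by $D1\cup D2$ plus a Bramble--Hilbert step for $v_I$ — and you correctly isolate the two genuinely delicate points: (i) uniform stability of the interpolation operator on a space whose shape functions are not explicit, and (ii) the fact that vertex values of $\nabla v$ are not well defined for $v\in H^2(E)$ in two dimensions, so the $s=2$ case needs a Cl\'ement-type averaging or a two-step interpolation. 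Two small cautions. First, "pull-back to a reference polygon" is not available for general polygonal meshes; the standard VEM argument instead rescales each $E$ to unit diameter and obtains constants uniform over the class of shapes admitted by Assumption~\ref{meshass}, using a priori estimates for the biharmonic problem that characterizes $\VEt$ (together with the constraint defining $\VE$) rather than a fixed reference element. Second, the cited references \cite{VemSteklov,volley} treat $C^0$ (second-order) virtual spaces, so the reference-element stability for the present $C^1$ space is not literally covered there and would need the biharmonic analogue you describe; if you were writing this out in full, that is the step that cannot simply be delegated to a citation.
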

%%%%
Let $$ \overline{\phi^\prime(u)}_{\vert E} {=} 3 \vert E \vert^{-1} a_E^0(u,u)-1 $$
%\frac{1}{\vert E\vert} \int_E 3 u^2 dx -1 
we define 
$$\overline{r}_h (u;v_h,w_h) {=} \sum_{E\in \Omega_h} \overline{\phi^\prime(u)}_{\vert E} a_{h,E}^\nabla (v_h,w_h).$$
We introduce the elliptic projection $P^h v \in W_h^0$ for $v\in H^4(\Omega)$ defined by
\begin{equation}\label{def:ellproj}
b_h(P^h v , \psi_h)= ({ \gamma^2}\Delta^2 v - \nabla \cdot (\phi^\prime (u) \nabla v) + \alpha v, \psi_h)
\end{equation}
for all $\psi_h \in W_h^0$, where $b_h(\cdot,\cdot)$ is the bilinear form 
\begin{equation}\label{def:bh}
b_h(v_h,w_h) = { \gamma^2 } a_h^{\Delta}(v_h,w_h) + \overline{r}_h (u;v_h,w_h) + \alpha (v_h,w_h)
\end{equation} 
being $\alpha$ a sufficiently large positive parameter.

For the subsequent analysis, it is instrumental to introduce the following auxiliary problem: find $\varphi \in V$ such that 
\begin{equation}\label{pb:aux}
b(\varphi,w)= (u-P^h u,w)_{H^1(\Omega)}
\end{equation}
for all $w\in V$, where $b(\cdot,\cdot)$ is the bilinear form 
\begin{equation}
b(v,w){=} { \gamma^2 } a^{\Delta}(v,w) + r(u;v,w) + \alpha (v,w) .
\end{equation}
We assume the validity of the following regularity result (see, e.g., \cite[Theorem A.1]{Elliott-French:1989} for a proof in the case a rectangular domain $\Omega$).
 % ----------------------------
\begin{assumption}\label{ass:reg}
Let $\varphi$ be the solution of \eqref{pb:aux}. Then it holds 
\begin{equation}\label{adj:reg}
\| \varphi \|_{H^3(\Omega)} \leq C_\Omega \| u - P^h u \|_{H^1(\Omega)}
\end{equation}
with $P^h u$ be the elliptic projection defined in \eqref{def:ellproj} and where $C_\Omega$ is a positive constant only depending on $\Omega$.
\end{assumption}
% ---------------------------

We now collect some technical results that will be useful to prove the main result (Theorem \ref{conv_theo}).

\begin{lemma}\label{lm:semi-discr1}
Let $u$ be the solution to \eqref{contpbl} and $P^h u$ be the elliptic projection defined in \eqref{def:ellproj}. Then it holds 
\begin{eqnarray}
&&\| u - P^h u\|_{H^2(\Omega)} \lesssim h\label{proj:1}\\
&&\| u- P^h u\|_{H^1(\Omega)} \lesssim h^2.\label{proj:2}
\end{eqnarray} 
\end{lemma}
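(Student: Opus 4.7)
My plan is to handle the two bounds in turn: \eqref{proj:1} by a coercivity/consistency argument and \eqref{proj:2} by an Aubin--Nitsche duality against the auxiliary problem \eqref{pb:aux}.

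For \eqref{proj:1} I would split $u - P^h u = (u - u_I) + (u_I - P^h u) =: \eta + \xi$, where $u_I \in W_h^0$ is the VEM interpolant of $u$ guaranteed by Proposition~\ref{prop:interpolation}. The first piece satisfies $\|\eta\|_{H^2(\Omega)} \lesssim h \|u\|_{H^3(\Omega)}$ directly (with $s=3$, $\ell=2$). For the second piece $\xi \in W_h^0$, I would invoke coercivity of $b_h$ on $W_h^0$, which follows from Lemma~\ref{lem:stab} applied to $a_h^{\Delta}$ (controlling $|\cdot|_{H^2}$), the exact $L^2$-term weighted by $\alpha$ (controlling $\|\cdot\|_{L^2}$), and absorbing $\overline{r}_h(u;\cdot,\cdot)$ into $\alpha(\cdot,\cdot)$ by choosing $\alpha$ large enough (using Assumption~\ref{ass:uh} to bound $\overline{\phi'(u)}_{\vert E}$ uniformly). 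Then
\[
\|\xi\|_{H^2(\Omega)}^2 \;\lesssim\; b_h(\xi,\xi) \;=\; b_h(u_I,\xi) - b_h(P^h u,\xi) \;=\; b_h(u_I,\xi) - b(u,\xi),
\]
where the last equality uses the defining relation \eqref{def:ellproj} and integration by parts (the boundary terms vanish because $\xi\in W_h^0$ has $\partial_{\bf n}\xi=0$ on $\partial\Omega$). Localising to each element $E$ and inserting the local $\mathbb{P}_2$-polynomial $u_\pi$ of Proposition~\ref{prop:interpolation}, polynomial consistency (Lemma~\ref{lem:cons}) makes the pure $u_\pi$ contributions cancel between the discrete and continuous forms. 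What remains is a sum of terms of type $a_{h,E}^{\Delta}(u_I - u_\pi,\xi)$, $a_{E}^{\Delta}(u_\pi - u,\xi)$, and their analogues for the $\alpha$- and $\overline r_h$-parts, each bounded by stability (Lemma~\ref{lem:stab}) and approximation by $h\,\|u\|_{H^3(E)}\,\|\xi\|_{H^2(E)}$. Summing and using Cauchy--Schwarz yields $\|\xi\|_{H^2(\Omega)}^2 \lesssim h\,\|\xi\|_{H^2(\Omega)}$, hence \eqref{proj:1} after triangle inequality.

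For \eqref{proj:2} I would carry out the Aubin--Nitsche duality: take $w = u - P^h u$ in \eqref{pb:aux} and split $b(\varphi,u-P^h u) = b(\varphi-\varphi_I,\,u-P^h u) + b(\varphi_I,\,u-P^h u)$. The first summand is bounded by continuity of $b$, the estimate \eqref{proj:1}, the interpolation bound $\|\varphi-\varphi_I\|_{H^2(\Omega)} \lesssim h\,\|\varphi\|_{H^3(\Omega)}$, and the regularity Assumption~\ref{ass:reg}, which together produce a factor $h^2\,\|u-P^h u\|_{H^1(\Omega)}$. The second summand is rewritten using the symmetry of $b$ and the defining relation of $P^h u$ tested against $\varphi_I$ (plus integration by parts, which moves the $\Delta^2$ onto $\varphi_I$): this gives $b(\varphi_I,u-P^h u) = (b_h - b)(P^h u,\varphi_I)$, a pure consistency discrepancy between the discrete and continuous bilinear forms evaluated on the discrete pair $(P^h u,\varphi_I)$. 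Inserting local polynomial projections of both $P^h u$ and $\varphi_I$ element by element and using Lemma~\ref{lem:cons} controls this discrepancy by $h^2 \|\varphi\|_{H^3(\Omega)}$; applying Assumption~\ref{ass:reg} once more and dividing by $\|u-P^h u\|_{H^1(\Omega)}$ gives \eqref{proj:2}.

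The main obstacle that I expect to require the most care is the handling of the semilinear form $\overline{r}_h - r$, because $\overline{r}_h$ replaces the pointwise coefficient $\phi'(u)=3u^2-1$ by its element-wise $L^2$-average $\overline{\phi'(u)}_{\vert E}$, introducing an extra consistency error beyond the usual VEM projection error. The key auxiliary estimate is $\|\phi'(u) - \overline{\phi'(u)}_{\vert E}\|_{L^2(E)} \lesssim h_E\,\|\phi'(u)\|_{H^1(E)}$, which I would combine with $u \in W^{1,\infty}(\Omega)$ (via $u \in H^4(\Omega) \hookrightarrow W^{1,\infty}(\Omega)$ in two dimensions and Assumption~\ref{ass:uh}) to show that the averaging contribution is of order $h$ for \eqref{proj:1} and $h^2$ for \eqref{proj:2}, thus not spoiling the expected rates.
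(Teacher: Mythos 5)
Your overall strategy coincides with the paper's: the $H^2$ bound via coercivity of $b_h$ plus polynomial consistency with an inserted interpolant $u_I$ and local polynomial $u_\pi$, and the $H^1$ bound via duality against \eqref{pb:aux} with the splitting $b(\varphi,u-P^hu)=b(\varphi-\varphi_I,u-P^hu)+b(\varphi_I,u-P^hu)$ and the identity $b(\varphi_I,u-P^hu)=(b_h-b)(P^hu,\varphi_I)$. Your argument for \eqref{proj:1} is essentially the paper's and is sound.

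There is, however, a genuine gap in your treatment of \eqref{proj:2}, precisely at the point you yourself flag as the main obstacle. After inserting local polynomial projections, the discrepancy $(b_h-b)(P^hu,\varphi_I)$ leaves behind the pure averaging error
$$
A_{2,2}=\sum_{E\in\Omega_h}\int_E\bigl(\overline{\phi'(u)}_{|E}-\phi'(u)\bigr)\,\nabla\varphi_I\cdot\nabla P^hu\,dx,
$$
which is not a discrete-versus-continuous projection error and is untouched by Lemma~\ref{lem:cons}. The tool you propose, $\|\phi'(u)-\overline{\phi'(u)}_{|E}\|_{L^2(E)}\lesssim h_E\|\phi'(u)\|_{H^1(E)}$ combined with $W^{1,\infty}$ bounds on $u$ and $P^hu$, yields only one power of $h$ for this term — a piecewise-constant approximation of the coefficient cannot do better than first order — and would therefore degrade \eqref{proj:2} to $O(h)$. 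The paper gains the second power by an orthogonality argument: since $\overline{\phi'(u)}_{|E}$ is the mean of $\phi'(u)$ on $E$, the factor $\overline{\phi'(u)}_{|E}-\phi'(u)$ has zero integral on each element, so one may subtract the elementwise constant $\overline{\nabla\varphi}\cdot\overline{\nabla u}$ for free and split $\nabla\varphi_I\cdot\nabla P^hu-\overline{\nabla\varphi}\cdot\overline{\nabla u}$ into two products, each carrying an additional $O(h)$ factor (namely $\nabla\varphi_I-\overline{\nabla\varphi}$ and $\nabla P^hu-\overline{\nabla u}$, the latter controlled also through \eqref{proj:1}). Without this cancellation, or an equivalent device, your argument does not close. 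Two minor further remarks: the coercivity of $b_h$ requires absorbing the negative part of $\overline{r}_h(u;v,v)\gtrsim-|v|_{H^1(\Omega)}^2$ into $\gamma^2|v|_{H^2(\Omega)}^2+\alpha\|v\|_{L^2(\Omega)}^2$ via the interpolation inequality of Lemma~\ref{lm:norm}, not into the $L^2$ term alone; and the uniform bound on $\overline{\phi'(u)}_{|E}$ follows from the regularity \eqref{CH-regularity} of the exact solution $u$, not from Assumption~\ref{ass:uh}, which concerns $u_h$.
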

\begin{proof}
 It is worth observing that the solution $u$ to \eqref{contpbl} satifies 
\begin{equation}\label{aux:1}
b(u,\psi_h)= (\gamma^2 \Delta^2 u - \nabla \cdot (\phi^\prime (u) \nabla u) + \alpha u, \psi_h)
\end{equation}
for all $\psi_h\in W_h^0$. 

We first prove \eqref{proj:1}. Let $u_I\in W_h^0$ be a generic element to be made precise later. We preliminary remark that, using $P^h u - u_I \in W_h^0$ together with Lemma~\ref{lem:stab} and choosing $\alpha$ sufficiently large, we obtain 
\begin{eqnarray}
b_h(P^h u - u_I, P^h u - u_I) \gtrsim\| P^h u - u_I \|^2_{H^2(\Omega)}. \label{aux:0}
\end{eqnarray}
Moreover, employing \eqref{def:ellproj} and \eqref{aux:1} yields 
\begin{eqnarray}\label{aux:4}
b_h (P^h u , \psi_h) = (F,\psi_h) = b(u,\psi_h)\qquad \forall \psi_h \in W_h^0
\end{eqnarray}
with $F{=}{ \gamma^2} \Delta^2 u - \nabla \cdot (\phi^\prime (u) \nabla u) + \alpha u$.

Thus, using \eqref{aux:4} and letting $u_\pi$ be a discontinuous piecewise quadratic polynomial, we get 
\begin{eqnarray}
b_h(P^h u - u_I, P^h u -u_I) &=& b_h(P^h u, P^h u - u_I) - b_h(u_I, P^h u - u_I)\nonumber\\
&=&   b(u, P^h u - u_I) -  b_h(u_\pi, P^h u - u_I) +  b_h(u_\pi - u_I , P^h u - u_I) \nonumber\\
&=&  b(u, P^h u - u_I) -  \overline{b}(u_\pi, P^h u - u_I) +  b_h(u_\pi - u_I , P^h u - u_I)\nonumber
\end{eqnarray}  
where in the last equality we apply the consistency result contained in Lemma~\ref{lem:cons} to the bilinear form $$\overline{b}(v,w){=} \sum_{E\in \Omega_h} { \gamma^2 }a_E^{\Delta}(v,w) + \overline{\phi^\prime (u)}_{\vert E} \ a_E^\nabla (v,w) + \alpha\,  a_E^0(v,w).$$

From the above identity, using \eqref{aux:0} we get 
\begin{eqnarray}
\| P^h u - u_I \|^2_{H^2(\Omega)} &\lesssim&  b(u, P^h u - u_I) -  \overline{b}(u, P^h u - u_I) 
 +  \overline{b}(u - u_\pi, P^h u - u_I)\nonumber\\
&& +\, b_h(u_\pi - u_I , P^h u - u_I).\label{aux:2}
\end{eqnarray} 
Let us now estimate each term on the right hand side of \eqref{aux:2}. From the definitions of the bilinear forms $b(\cdot,\cdot) $ and $\overline{b}(\cdot,\cdot)$, and {employing the interpolation estimates given in Proposition~\eqref{prop:interpolation}}, we obtain
\begin{eqnarray}
 b(u, P^h u - u_I) -  \overline{b}(u, P^h u - u_I)  &=& \sum_{E\in \Omega_h} \int_E (\phi^\prime (u) - \overline{\phi^\prime (u)}_{\vert E}) \nabla u \cdot  \nabla (P^h u - u_I) dx \nonumber\\
 &\lesssim& h \| P^h u - u_I\|_{H^2(\Omega)} \label{aux:5}.
\end{eqnarray} 
Moreover, choosing $u_I$ and $u_\pi$ such that (see Proposition~\ref{prop:interpolation})
\begin{equation}\label{aux:7}
\| u- u_I\|_{H^2(E)} + \| u- u_\pi\|_{H^2(E)} \lesssim h
\end{equation} 
and employing the continuity properties of $\overline{b}(\cdot,\cdot)$ and $b_h(\cdot,\cdot)$ we get 
\begin{equation}\label{aux:6}
\overline{b}(u - u_\pi, P^h u - u_I) + b_h(u_\pi - u_I , P^h u - u_I) \lesssim h \| P^h u - u_I\|_{H^2(\Omega)} .
\end{equation}
Substituting \eqref{aux:5} and \eqref{aux:6}  in \eqref{aux:2} and using triangle inequality together with \eqref{aux:7} we get \eqref{proj:1}.

We now prove \eqref{proj:2}. Taking $w=u-P^h u$ in \eqref{pb:aux} yields 
\begin{equation}\label{eq:basic}
\| u - P^h u\|^2_{H^1(\Omega)} = b(\varphi,u-P^h u)=b(\varphi - \varphi_I, u-P^h u) + b(\varphi_I, u- P^h u)
\end{equation}
We now estimate each term on the right hand side of the above equation.
Choosing, accordingly to Proposition~\eqref{prop:interpolation}, $\varphi_I$ such that $\| \varphi -\varphi_I\|_{H^2(\Omega)} \lesssim h$, using \eqref{adj:reg} and employing the continuity property of the bilinear form $b(\cdot,\cdot)$ together with \eqref{proj:1} we get 
\begin{equation}\label{eq:A0}
b(\varphi-\varphi_I,u-P^h u)\lesssim \| \varphi -\varphi_I\|_{H^2(\Omega)} \| u  -P^h u\|_{H^2(\Omega)} \lesssim h^2 
\| u - P^h u\|_{H^1(\Omega)}.  
\end{equation}
Using \eqref{aux:1} with $\varphi_I\in W_h^0$ we get $b(\varphi_I,u) = b_h(\varphi_I , P^h u)$ which implies 
\begin{eqnarray}
b(\varphi_I,u-P^h u)&=& b_h(\varphi_I, P^h u) - b(\varphi_I,P^h u)\nonumber\\
&=& { \gamma^2 } (a^\Delta_h(\varphi_I,P^h u) - a^\Delta (\varphi_I, P^h u)) + \overline{r}_h(u;\varphi_I,P^h u) - r(u;\varphi_I,P^h u)\nonumber\\
&{=}&{ \gamma^2 }A_1+A_2.\nonumber  
\end{eqnarray}
Let $u_\pi$ and $\varphi_\pi$ be piecewise discontinuous quadratic polynomials such that $\|u-u_\pi\|_{H^2(E)} \lesssim h$ and $\|\varphi-\varphi_\pi\|_{H^2(E)} \lesssim h$. Applying twice the consistency result contained in Lemma~\ref{lem:cons}  together with \eqref{proj:1} we obtain 
\begin{eqnarray}
A_1 &=&  \sum_{E\in \Omega_h} a_{h,E}^\Delta(\varphi_I - \varphi_\pi, P^h u- u_\pi) - \sum_{E\in \Omega_h} a^\Delta_E (\varphi_I - \varphi_\pi, P^h u- u_\pi)\nonumber\\
&\lesssim& (\|\varphi-\varphi_I\|_{H^2(E)} + \|\varphi-\varphi_\pi\|_{H^2(E)})( \|P^h u - u \|_{H^2(E)} + \|u -u_\pi\|_{H^2(E)})\nonumber\\
&\lesssim& h^2 \vert \varphi \vert_{H^3(\Omega)}   \vert u \vert_{H^3(\Omega)}\lesssim h^2 \| u- P^h u\|_{H^1(\Omega)}.\label{eq:A1}
\end{eqnarray}
Let us now estimate the term $A_2$. Using the definitions of $r(\cdot;\cdot,\cdot)$ and $\overline{r}_h(\cdot;\cdot,\cdot)$ we get 
\begin{eqnarray}
A_2&=&\sum_{E\in \Omega_h} \overline{\phi^\prime (u)}_{\vert E} \left ( a_{h,E}^\nabla (\varphi_I,P^h u) -
a_{E}^\nabla (\varphi_I,P^h u)\right  ) + \int_E \left (\overline{\phi^\prime(u)}_{\vert E}-\phi^\prime (u)\right ) \nabla \varphi_I \cdot \nabla P^h u  \nonumber \\
&=:& A_{2,1} + A_{2,2}.\nonumber
\end{eqnarray}
Proceeding as in the bound of $A_1$ and employing assumption \eqref{CH-regularity} on the regularity if $u$ we obtain 
\begin{equation}\label{eq:A21}
A_{2,1} \lesssim h^2 \| u- P^h u\|_{H^1(\Omega)}.
\end{equation}
Finally, we estimate the term $A_{2,2}$. By employing the orthogonality property of projectors and 
denoting by $\overline{(\cdot)}$ the projection of $(\cdot)$ on constants we get 
\begin{eqnarray}
A_{2,2}&=& \sum_{E\in \Omega_h} \int_E \left (\overline{\phi^\prime(u)}_{\vert E}-\phi^\prime (u)\right ) \left (\nabla \varphi_I \cdot \nabla P^h u - \overline{\nabla \varphi} \cdot \overline{\nabla u}\right)\, dx\nonumber\\
&=&\sum_{E\in \Omega_h} 
\int_E  \left( \overline{\phi^\prime(u)}_{\vert E}-\phi^\prime (u)\right) 
( \nabla \varphi_I -  \overline{\nabla \varphi} )\cdot \nabla P^h u \, dx\nonumber\\
&&+ \int_E \left (\overline{\phi^\prime(u)}_{\vert E}-\phi^\prime (u)\right ) \overline{\nabla \varphi} 
\cdot (\nabla P^h u - \overline{\nabla u})\, dx.
\end{eqnarray}
{Using the interpolation estimates given in Proposition~\eqref{prop:interpolation}}, and  employing  \eqref{adj:reg} and \eqref{proj:1} together with 
the following inequalities
\begin{eqnarray}
&&\|\nabla \varphi_I - \overline{\nabla \varphi } \|_{L^2(E)} \leq 
\|\nabla \varphi_I - \nabla \varphi \|_{L^2(E)} + \|\nabla \varphi- \overline{\nabla \varphi} \|_{L^2(E)} \lesssim h 
\| \varphi \|_{H^2(E)} \nonumber	\\
&&\| \nabla P^h u \|_{L^2(E)}\leq \| \nabla P^h u - \nabla u \|_{L^2(E)} + \| \nabla  u \|_{L^2(E)}\lesssim (1+h ) 
\| u \|_{H^2(E)}\nonumber\\
&&\| \overline{\nabla \varphi} \|_{L^2(E)}\leq \| \overline{\nabla \varphi_I} - {\nabla \varphi}\|_{L^2(E)} + 
\| {\nabla \varphi} \|_{L^2(E)} \lesssim (1+h ) \| \varphi \|_{H^2(E)}\nonumber\\
&&\| \nabla P^h u - \overline{\nabla u}\|_{L^2(E)} = \| \nabla (P^h u - u) + (\nabla u - \overline{\nabla u})\|_{L^2(E)}\lesssim  \| P^h u - u\|_{H^2(E)}  + h \| u \|_{H^2(E)} \nonumber\\
&& \| \overline{\phi^\prime(u)}_{\vert E}-\phi^\prime (u) \|_{L^\infty(E)} \lesssim h |\phi^\prime (u)|_{W^{1,\infty}(E)} , \nonumber
\end{eqnarray}
we obtain 
\begin{equation}\label{eq:A22}
A_{2,2} \lesssim h^2 \| P^h u -u \|_{H^1(\Omega)}. 
\end{equation}
Combining \eqref{eq:A1}, \eqref{eq:A21}, \eqref{eq:A22}, \eqref{eq:A0} with \eqref{eq:basic} we obtain \eqref{proj:2}.
\end{proof}

\begin{lemma}\label{lm:semi-discr2}
Let $u$ be the solution to \eqref{contpbl} and $P^h u$ be the elliptic projection defined in \eqref{def:ellproj}. Then it holds 
\begin{eqnarray}
&&\| u_t - (P^h u)_t\|_{H^2(\Omega)} \lesssim h\label{proj:2.1}\\
&&\| u_t- (P^h u)_t\|_{H^1(\Omega)} \lesssim h^2.\label{proj:2.2}
\end{eqnarray} 
\end{lemma}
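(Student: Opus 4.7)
The plan is to decompose
$$u_t - (P^h u)_t = \big(u_t - P^h u_t\big) + \big(P^h u_t - (P^h u)_t\big),$$
and to treat the two summands independently. The first one enjoys the theory already developed for the elliptic projection, while the second is the genuinely new object: it measures the failure of $P^h$ to commute with $\partial_t$, caused by the time dependence of $b_h(\cdot,\cdot)$ through the coefficient $\overline{\phi'(u)}_{|E}$.

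For the first summand I rely on the regularity assumption $u \in C^1(0,T;H^4(\Omega)\cap V)$, which grants $u_t$ the same spatial regularity and the same Neumann trace as $u$. A single integration by parts then gives $b_h(P^h u_t,\psi_h) = b(u_t,\psi_h)$ for every $\psi_h \in W_h^0$, i.e.\ the structural identity that drives the proof of Lemma~\ref{lm:semi-discr1}. That proof transfers without modification and yields
$$\|u_t - P^h u_t\|_{H^2(\Omega)} \lesssim h \qquad \text{and} \qquad \|u_t - P^h u_t\|_{H^1(\Omega)} \lesssim h^2.$$

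To estimate $\delta := P^h u_t - (P^h u)_t$, I first derive its governing equation by differentiating the defining relation of $P^h u$ in time. Using that $\partial_t \overline{\phi'(u)}_{|E} = 6|E|^{-1} a_E^0(u,u_t) =: c_E$ and $\partial_t \phi'(u) = 6\,u\,u_t$, the product rule together with an integration by parts produces
$$b_h(\delta,\psi_h) = \sum_{E\in\Omega_h} c_E\, a_{h,E}^\nabla(P^h u,\psi_h) - \int_\Omega \partial_t\phi'(u)\,\nabla u\cdot\nabla\psi_h\,{\rm d}x \qquad \forall\psi_h\in W_h^0.$$
I then split the right-hand side by adding and subtracting $\sum_E c_E\, a_E^\nabla(P^h u,\psi_h)$ and $\sum_E c_E\int_E \nabla u\cdot\nabla\psi_h$: the first piece is controlled via the consistency of $a_{h,E}^\nabla(\cdot,\cdot)$ (Lemma~\ref{lem:cons}) combined with polynomial interpolation on both entries; the second is $O(h)\|\psi_h\|_{H^1(\Omega)}$ by \eqref{proj:1} and the bound $|c_E|\lesssim 1$; the third exploits that $c_E$ is the element mean of $\partial_t\phi'(u)$, so $\|c_E - \partial_t\phi'(u)\|_{L^\infty(E)}\lesssim h$ by the smoothness of $u$ and $u_t$. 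The whole right-hand side is therefore $O(h)\|\psi_h\|_{H^1(\Omega)}$; testing with $\psi_h = \delta$ and invoking the coercivity of $b_h$ for $\alpha$ sufficiently large (argued as in Lemma~\ref{lm:semi-discr1}) delivers $\|\delta\|_{H^2(\Omega)}\lesssim h$, which combined with the first step proves \eqref{proj:2.1}.

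For \eqref{proj:2.2} I repeat the duality argument used in the proof of \eqref{proj:2}. I consider the auxiliary problem \eqref{pb:aux} with right-hand side $(\delta,w)_{H^1(\Omega)}$, use Assumption~\ref{ass:reg} to obtain $\|\varphi\|_{H^3(\Omega)}\lesssim \|\delta\|_{H^1(\Omega)}$, and insert the interpolant $\varphi_I\in W_h^0$ supplied by Proposition~\ref{prop:interpolation}. Substituting the commutator equation above in place of the analogous identity for $u-P^h u$ used in Lemma~\ref{lm:semi-discr1}, and exploiting the improved bound $\|\varphi-\varphi_I\|_{H^2(\Omega)}\lesssim h\|\varphi\|_{H^3(\Omega)}$ together with the $H^2$ estimate on $\delta$ just proved, each term in the splitting picks up one additional factor of $h$; this yields $\|\delta\|_{H^1(\Omega)}\lesssim h^2$, and combined with the first step it gives \eqref{proj:2.2}. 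The main obstacle is the bookkeeping associated with the commutator: once its $O(h)$ size is secured using \eqref{proj:1} and the element-mean properties of $c_E$, the rest of the argument follows the template of Lemma~\ref{lm:semi-discr1}.
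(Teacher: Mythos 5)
Your proof is correct and follows essentially the same route as the paper: both hinge on differentiating the defining relation \eqref{def:ellproj} in time and on the key bound $\|\partial_t\overline{\phi'(u)}_{|E}-6uu_t\|_{L^\infty(E)}=\|\overline{6uu_t}-6uu_t\|_{L^\infty(E)}\lesssim h$, after which the energy and duality machinery of Lemma~\ref{lm:semi-discr1} is re-run. The only difference is organizational: you insert the intermediate quantity $P^h u_t$ and estimate the commutator $P^h u_t-(P^h u)_t$ separately, whereas the paper treats $(P^h u)_t$ directly as a perturbed elliptic projection of $u_t$; the resulting estimates coincide.
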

\begin{proof}
It is sufficient to observe that it holds
$$b_h ((P^h u)_t , \psi_h) = b(u_t,\psi_h) + (\phi^{\prime\prime}(u) u_t \nabla u,\nabla \psi_h) - 
\sum_{E\in \Omega_h} \partial_t \overline{(\phi^\prime (u))}_{\vert E} a_{h,E}^\nabla (P^h u, \psi_h)$$
for all $\psi_h \in W_h^0$. Then proceeding as in Lemma~\ref{lm:semi-discr1} and using 
$$\| \partial_t \overline{(\phi^\prime (u))}_{\vert E} -\phi^{\prime\prime} (u)u_t \|_{L^\infty(E)}= 
\| \overline{6 u u_t} - 6 u u_t\|_{L^\infty(E)}\lesssim h$$
we obtain the thesis.
\end{proof}

\begin{lemma}\label{lm:pert}
Let $u$ be the solution to \eqref{contpbl} and $P^h u$ be the elliptic projection defined in \eqref{def:ellproj}. 
Then, setting $\rho=u-P^h u$ and $\theta=P^h u - u_h$,  it holds 
\begin{equation}
 r_h(u_h,u_h;\theta)- \overline{r}_h (u;P^h u,\theta) \lesssim \vert \theta\vert_{H^1(\Omega)} \left(\|\theta\|_{L^2(\Omega)} + \| \rho\|_{L^2(\Omega)} + \vert \theta \vert_{H^1(\Omega)} + h^2 \right) .
\end{equation}
\end{lemma}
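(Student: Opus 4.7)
The plan is to first use $u_h = P^h u - \theta$ together with the bilinearity of $a_{h,E}^\nabla$ in the last two slots of $r_h$ to split the expression as
$$
r_h(u_h;u_h,\theta) - \overline{r}_h(u;P^h u,\theta) = -\,r_h(u_h;\theta,\theta) + \sum_{E\in\Omega_h}\bigl(\widehat{\phi'(u_h)}_{|E} - \overline{\phi'(u)}_{|E}\bigr)\,a_{h,E}^\nabla(P^h u,\theta).
$$
The first term is easy: by Assumption~\ref{ass:uh} and the stability Lemma~\ref{lem:stab} applied to $a_{h,E}^0$, the element constant $\widehat{\phi'(u_h)}_{|E}$ is uniformly bounded, so applying Lemma~\ref{lem:stab} to $a_{h,E}^\nabla$ yields $|r_h(u_h;\theta,\theta)|\lesssim |\theta|_{H^1(\Omega)}^2$. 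This accounts for the $|\theta|_{H^1(\Omega)}^2$ summand in the target bound.

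For the remainder, I would write $\widehat{\phi'(u_h)}_{|E}-\overline{\phi'(u)}_{|E} = 3|E|^{-1}[a_{h,E}^0(u_h,u_h) - a_E^0(u,u)]$ and insert $\pm a_E^0(u_h,u_h)$. The continuous part is treated by the difference-of-squares identity $a_E^0(u_h,u_h) - a_E^0(u,u) = -a_E^0(\rho+\theta,u_h+u)$ combined with $\|u\|_{L^\infty},\|u_h\|_{L^\infty}\lesssim 1$ (from the regularity of $u$ and Assumption~\ref{ass:uh}), contributing a factor of order $|E|^{1/2}(\|\rho\|_{L^2(E)}+\|\theta\|_{L^2(E)})$. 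For the virtual--continuous gap $a_{h,E}^0(u_h,u_h) - a_E^0(u_h,u_h)$ I would invoke the standard VEM trick: for any polynomial $u_\pi\in\P_2(E)$, the consistency of $a_{h,E}^0$ (Lemma~\ref{lem:cons}) reduces the gap to $a_{h,E}^0(u_h-u_\pi,u_h-u_\pi) - a_E^0(u_h-u_\pi,u_h-u_\pi)$, which by Lemma~\ref{lem:stab} is bounded by $\|u_h-u_\pi\|_{L^2(E)}^2$. Selecting $u_\pi$ as the local polynomial approximation of $u$ from Proposition~\ref{prop:interpolation} (so that $\|u-u_\pi\|_{L^2(E)}\lesssim h^3$) and invoking Lemma~\ref{lm:semi-discr1}, this piece is controlled by $\|\rho\|_{L^2(E)}^2 + \|\theta\|_{L^2(E)}^2 + h^6$.

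To conclude, I would pair these coefficient estimates with the continuity bound $|a_{h,E}^\nabla(P^h u,\theta)|\lesssim |P^h u|_{H^1(E)}|\theta|_{H^1(E)}$ (from Lemma~\ref{lem:stab}) and sum over elements. The unwelcome $|E|^{-1/2}$ factor coming from the prefactor $|E|^{-1}$ in the continuous piece should be absorbed by exploiting the uniform estimate $\|\nabla P^h u\|_{L^\infty(E)}\lesssim 1$ (so $|P^h u|_{H^1(E)}\lesssim |E|^{1/2}$), after which a Cauchy--Schwarz in the element index produces $(\|\rho\|_{L^2(\Omega)}+\|\theta\|_{L^2(\Omega)})|\theta|_{H^1(\Omega)}$. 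The quadratic pieces of type $|E|^{-1}\|\rho\|_{L^2(E)}^2$ are handled using $\|\rho\|_{L^\infty}\lesssim h$ (from Lemma~\ref{lm:semi-discr1} and the Sobolev embedding $H^2\hookrightarrow L^\infty$ in two dimensions) and $\|\theta\|_{L^\infty}\lesssim 1$, yielding a harmless $h^2|\theta|_{H^1(\Omega)}$ term; the residual $h^6$ contribution is trivially absorbed after one more Cauchy--Schwarz. The main obstacle is keeping careful track of these local scalings on general polygonal elements and justifying the uniform $W^{1,\infty}$ control on $\nabla P^h u$, which is implicit since $P^h u$ is not a polynomial in the element interior.
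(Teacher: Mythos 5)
Your proposal is correct and follows essentially the same route as the paper: the identical splitting into $-r_h(u_h;\theta,\theta)$ plus the coefficient-difference term $\sum_E(\widehat{\phi'(u_h)}-\overline{\phi'(u)})_{|E}\,a_{h,E}^\nabla(P^h u,\theta)$, the same reliance on $\|u_h\|_{L^\infty}\lesssim 1$ and the uniform $W^{1,\infty}$ bound on $P^h u$ (which the paper likewise only asserts, by reference to Elliott--French), and the same consistency-plus-stability treatment of the virtual/continuous gap. The only inessential difference is the telescoping of the coefficient: the paper inserts $\widehat{\phi'(P^h u)}$ and $\overline{\phi'(P^h u)}$ so that each of the three resulting pieces is already linear in the errors $\theta$ and $\rho$, whereas your insertion of $a_E^0(u_h,u_h)$ produces quadratic terms $\|u_h-u_\pi\|_{L^2(E)}^2$ that you must then linearize with an extra $L^\infty$ step --- which works, but is slightly more laborious.
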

\begin{proof}
We preliminary observe that using Lemma~\ref{lm:semi-discr1} and~\ref{lm:semi-discr2} and proceeding as in the proof of \cite[(3.2c)]{Elliott-French:1989} yield $P^h u \in W^{1,\infty}(\Omega)$, with norm bounded uniformly in time.  Moreover, it holds
\begin{eqnarray}
r_h(u_h;u_h,\theta)- \overline{r}_h (u;P^h u,\theta)&=& r_h(u_h;P^h u,\theta)- \overline{r}_h (u;P^h u,\theta) + 
 r_h(u_h;u_h-P^h u,\theta)\nonumber\\
 &=&\sum_{E\in \Omega_h} (\widehat{\phi^\prime(u_h)} - \overline{\phi^\prime (u)})_{\vert E} a_{h,E}^\nabla (P^h u, \theta) \nonumber\\
 &&\quad +\,  r_h(u_h;u_h-P^h u,\theta)\nonumber\\
 &=&A+B. \nonumber
\end{eqnarray}
Let us first estimate the term $A$ which can be written as follows
\begin{eqnarray}
A&=& \sum_{E\in \Omega_h}  ( \widehat{\phi^\prime(u_h)} - \widehat{\phi^\prime(P^h u)} + 
\widehat{\phi^\prime(P^h u)} -  \overline{\phi^\prime (P^h u)} +   
\overline{\phi^\prime (P^h u)}  - \overline{\phi^\prime (u)})_{\vert E}  a_{h,E}^\nabla (P^h u, \theta)
\nonumber\\
&=&\sum_{E\in \Omega_h}  ( I + II + III)_{\vert E}  a_{h,E}^\nabla (P^h u, \theta).\nonumber
\end{eqnarray}
Using Lemma~\ref{lem:cons} we obtain  
\begin{eqnarray}
A &\lesssim& \sum_{E\in \Omega_h}  (|I| + |II| + |III|)_{\vert E} \| P^h u \|_{H^1(E)}\vert \theta\vert_{H^1(E)}\nonumber\\
&\lesssim& \| P^h u \|_{W^{1,\infty}(\Omega)}\sum_{E\in \Omega_h}  \vert E\vert^{1/2} (|I| + |II| + |III|)_{\vert E} 
\vert \theta\vert_{H^1(E)}\nonumber\\
&\lesssim& \| P^h u \|_{W^{1,\infty}(\Omega)}
\left(\sum_{E\in \Omega_h}  \vert E\vert (I^2 + II^2 + III^2)_{\vert E}\right)^{1/2} \vert \theta\vert_{H^1(\Omega)} 
\nonumber\\
&\lesssim& 
 \| P^h u \|_{W^{1,\infty}(\Omega)} (A_I+A_{II}+A_{III})  |\theta|_{H^1(\Omega)} .\label{aux:11}
\end{eqnarray}
where $A_{(\cdot)}{=} \left(\sum_{E\in \Omega_h}  \vert E\vert (\cdot)^2_{\vert E}\right)^{1/2}$.
Using the definition of $\widehat{(\cdot)}$ and Lemma~\ref{lem:cons} we obtain
\begin{eqnarray}
I&=&\frac{3}{\vert E\vert} (a_{h,E}^0 (u_h,u_h)-a_{h,E}^0(P^h u,P^h u))\nonumber\\
&=& \frac{3}{\vert E\vert} (a_{h,E}^0 (u_h-P^h u,u_h+P^h u)) \nonumber\\
&\lesssim & \frac{3}{\vert E\vert} \| u_h-P^h u\|_{L^2(E)} (\| u_h\|_{L^2(E)}+\|P^h u\|_{L^2(E)})
\end{eqnarray}
which implies
\begin{eqnarray}
A_I &\lesssim& \sum_{E\in \Omega_h} \left(\frac{1}{\vert E\vert} \|\theta\|^2_{L^2(E)} 
\left (\| u_h\|_{L^2(E)}+\|P^h u\|_{L^2(E)}\right)^2\right)^{1/2}\nonumber\\
&\lesssim& (\|u_h\|_{L^\infty(\Omega)} + \|P^h u\|_{L^\infty(\Omega)})\left( \sum_{E\in \Omega_h} 
\| \theta\|^2_{L^2(E)}\right)^{1/2}\nonumber\\
&\lesssim& \| \theta\|_{L^2(\Omega)}
\end{eqnarray}
where in the last step we employed Assumption \ref{ass:uh} on the regularity of $u_h$.

Similarly, using the definition of $\overline{(\cdot)}$ we have 
\begin{eqnarray}
III=\frac{3}{\vert E\vert} \left( \int_E (P^h u)^2 -\int_E u^2 \right)\leq \frac{3}{\vert E\vert} 
\| P^h u -u\|_{L^2(E)}\left( \| u\|_{L^2(E)} + \| P^h u\|_{L^2(E)} \right)\nonumber
\end{eqnarray}
which yields
\begin{eqnarray}
A_{III}\lesssim (\|u_h\|_{L^\infty(\Omega)} + \|P^h u\|_{L^\infty(\Omega)})\left( \sum_{E\in \Omega_h} 
\| P^h u - u \|^2_{L^2(E)}\right)^{1/2}\lesssim\| P^h u - u \|_{L^2(\Omega)}.\nonumber
\end{eqnarray}
Finally, employing $q \in \mathbb{P}_2(E)$  together with Lemma~\ref{lem:cons} and {the interpolation estimates of Proposition~\ref{prop:interpolation}}, it is easy to prove that the following holds
\begin{eqnarray}
II&=&\frac{3}{\vert E \vert} \left( a_{h,E}^0 (P^h u,P^h u) - (P^h u,P^h u) \right) = 
\frac{3}{\vert E \vert} \left( a_{h,E}^0 (P^h u - q,P^h u) - (P^h u - q,P^h u) \right) \nonumber\\
&\lesssim& \frac{1}{\vert E \vert} \|P^h u - q\|_{L^2(E)} \| P^h u\|_{L^2(E)} 
\lesssim  \frac{1}{\vert E \vert} \left ( \|P^h u - u\|_{L^2(E)} + \|u - q\|_{L^2(E)} \right)\| P^h u\|_{L^2(E)}\nonumber\\
&\lesssim&  \frac{1}{\vert E \vert} \left ( \|P^h u - u\|_{L^2(E)} + h^2 \right)\| P^h u\|_{L^2(E)}\nonumber
\end{eqnarray}
which implies 
\begin{eqnarray}
A_{II} &\lesssim& \| P^h u\|_{L^\infty (\Omega)} \left (\left ( \sum_{E\in \Omega_h}  \|P^h u - u\|^2_{L^2(E)} \right)^{1/2}  + h^2\right)\nonumber\\
&\lesssim&  \|P^h u - u\|_{L^2(\Omega)} + h^2. 
\end{eqnarray}
Employing the above estimates for $A_I,A_{II}$ and $A_{III}$ into \eqref{aux:11} and recalling that $\| P^h u \|_{W^{1,\infty}(\Omega)}$ is uniformly bounded in time, we get
\begin{eqnarray}
A \lesssim  \left(\|\theta\|_{L^2(\Omega)} + \| P^h u -u \|_{L^2(\Omega)} + h^2 \right)  \vert \theta\vert_{H^1(\Omega)}.
\end{eqnarray}
To conclude it is sufficient to estimate $B$. Using the definition of $r_h(\cdot;\cdot,\cdot)$ together with  Lemma~\ref{lem:stab} and Assumption \ref{ass:uh} we have 
\begin{eqnarray}
B\lesssim \|u_h\|_{L^\infty(\Omega)} \vert \theta \vert^2_{H^1(\Omega)}\lesssim  \vert \theta \vert^2_{H^1(\Omega)}.
\end{eqnarray}
\end{proof}
\begin{lemma}\label{lm:norm}
Let $v_h\in W_h^0$ and $\varepsilon >0$. Then  there exists a constant $C_\varepsilon$ depending on $\epsilon$ such that it holds 
\begin{equation}
\vert v_h \vert^2_{H^1(\Omega)} \leq \varepsilon \vert v_h \vert^2_{H^2(\Omega)} + C_\varepsilon \| v_h \|^2_{L^2(\Omega)}. 
\end{equation}
\end{lemma}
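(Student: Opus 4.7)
The plan is to recognize this as a standard Ehrling-type (Lions--Peetre) interpolation inequality and to prove it directly in the ambient continuous space $H^2(\Omega)$, rather than exploiting any discrete structure of $W_h^0$. Since $W_h^0 \subset H^2(\Omega)$ by construction and $\Omega$ is a bounded polygonal (hence Lipschitz) domain, it suffices to establish the inequality for every $v \in H^2(\Omega)$, with a constant $C_\varepsilon$ depending only on $\Omega$ and $\varepsilon$; specializing to $v = v_h$ then yields the statement with $C_\varepsilon$ independent of $h$.

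The argument I would carry out is by contradiction and rests on the compact embedding $H^2(\Omega) \hookrightarrow\hookrightarrow H^1(\Omega)$, which is a direct consequence of the Rellich--Kondrachov theorem for bounded Lipschitz domains. Suppose the claim fails: then there exist $\varepsilon_0 > 0$ and a sequence $\{v_n\} \subset H^2(\Omega)$ with
$$
|v_n|^2_{H^1(\Omega)} > \varepsilon_0 |v_n|^2_{H^2(\Omega)} + n \|v_n\|^2_{L^2(\Omega)}.
$$
Normalizing so that $|v_n|_{H^1(\Omega)} = 1$, I would deduce $|v_n|_{H^2(\Omega)} < 1/\sqrt{\varepsilon_0}$ and $\|v_n\|_{L^2(\Omega)} < 1/\sqrt{n}$. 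Hence $\{v_n\}$ is bounded in $H^2(\Omega)$, and the compact embedding yields a subsequence (still denoted $\{v_n\}$) converging strongly in $H^1(\Omega)$ to some $v^{\ast} \in H^1(\Omega)$. Strong convergence in $L^2(\Omega)$ combined with $\|v_n\|_{L^2(\Omega)} \to 0$ forces $v^{\ast} = 0$, while strong convergence in $H^1(\Omega)$ forces $|v_n|_{H^1(\Omega)} \to |v^{\ast}|_{H^1(\Omega)} = 0$, contradicting the normalization $|v_n|_{H^1(\Omega)} = 1$.

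There is no real obstacle here: the result is entirely standard, and the only point that needs to be checked is the applicability of Rellich--Kondrachov, which is immediate since $\Omega$ is a bounded Lipschitz domain. The conceptual step is simply to observe that the inequality is a continuous-level statement and that the discrete constraint $v_h \in W_h^0$ enters only through the inclusion $W_h^0 \subset H^2(\Omega)$, which ensures the constant $C_\varepsilon$ is independent of $h$.
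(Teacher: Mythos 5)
Your proof is correct, but it follows a genuinely different route from the paper. You invoke the classical Ehrling/Lions--Peetre compactness argument: argue by contradiction, normalize $|v_n|_{H^1(\Omega)}=1$, extract a subsequence via the compact embedding $H^2(\Omega)\hookrightarrow\hookrightarrow H^1(\Omega)$, and derive a contradiction from $\|v_n\|_{L^2(\Omega)}\to 0$. The paper instead gives a short direct proof: writing $|v_h|^2_{H^1(\Omega)}=\sum_E\int_E|\nabla v_h|^2$ and integrating by parts on each element, the inter-element boundary terms cancel because $W_h^0\subset H^2(\Omega)$ (so $v_h$ and $\nabla v_h$ are continuous across edges) and the terms on $\partial\Omega$ vanish thanks to the condition $\partial_{\bf n}v_h=0$ built into $W_h^0=W_h\cap V$; what remains is $-\int_\Omega v_h\,\Delta v_h\,{\rm d}x$, which Cauchy--Schwarz and Young's inequality bound by $\varepsilon\|\Delta v_h\|^2_{L^2(\Omega)}+C_\varepsilon\|v_h\|^2_{L^2(\Omega)}$. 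The trade-off is clear: your argument is entirely at the continuous level, requires no boundary condition and no discrete structure beyond $W_h^0\subset H^2(\Omega)$, but is non-constructive (the constant $C_\varepsilon$ comes out of a compactness contradiction); the paper's argument is elementary and yields an explicit constant $C_\varepsilon=O(1/\varepsilon)$, at the price of exploiting the specific $H^2$-conformity and the homogeneous Neumann condition of the discrete space. Both establish the lemma, and in both cases $C_\varepsilon$ is manifestly independent of $h$.
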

\begin{proof}
It is straightforward to observe that it holds
\begin{eqnarray}
\vert v_h \vert^2_{H^1(\Omega)} &=& \sum_{E} \int_E \nabla v_h \cdot \nabla v_h =
\sum_{E} \left\{ \int_{\partial E} v_h \frac{\partial v_h}{\partial n} \, ds - \int_E \Delta v_h   v_h\, dx \right\} \nonumber\\
&=&\sum_{E} \left\{ - \int_E \Delta v_h   v_h\, dx \right\} \leq \varepsilon \| \Delta v_h \|_{L^2(\Omega)}^2 + C_\varepsilon \| v_h \|^2_{L^2(\Omega)} 
\end{eqnarray} 
where we used Cauchy-Schwarz inequality and the fact that $W_h^0 \subset H^2(\Omega)$.
\end{proof}

We are now ready to prove the following convergence result. 
\begin{theorem}\label{conv_theo}
Let $u$ be the solution to \eqref{contpbl} and $u_h$ the solution to  \eqref{discrpbl}. Then for all $t \in [0,T]$ it holds 
\begin{equation}
\| u - u_h \|_{L^2(\Omega)} \lesssim h^2.
\end{equation}
\end{theorem}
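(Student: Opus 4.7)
\bigskip

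\noindent\textbf{Proof plan for Theorem \ref{conv_theo}.}
The plan is to follow the classical Wheeler-type splitting
\begin{equation*}
u-u_h = (u-P^h u) + (P^h u - u_h) =: \rho + \theta,
\end{equation*}
so that $\theta \in W_h^0$. The $L^2$ bound on $\rho$ is already provided by \eqref{proj:2} of Lemma~\ref{lm:semi-discr1}, which gives $\|\rho\|_{L^2(\Omega)}\le \|\rho\|_{H^1(\Omega)}\lesssim h^2$. The whole work is therefore concentrated in showing $\|\theta\|_{L^2(\Omega)}\lesssim h^2$ uniformly in $t$, after which the triangle inequality closes the argument. The initial datum $u_{0,h}$ is chosen so that $\|\theta(0)\|_{L^2(\Omega)}\lesssim h^2$ (e.g.\ $u_{0,h}=P^h u_0$).

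\smallskip

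To obtain an equation for $\theta$, I would subtract the semi-discrete scheme \eqref{discrpbl} from a suitable reformulation of the continuous problem \eqref{contpbl} tested against $v_h\in W_h^0$. Using the elliptic projection identity $b_h(P^h u, v_h) = b(u,v_h)$ (which is \eqref{aux:4}) together with the continuous equation $a^0(u_t,v_h)+\gamma^2 a^\Delta(u,v_h)+r(u;u,v_h)=0$, after rearranging one reaches an error equation of the form
\begin{equation*}
a_h^0(\theta_t,v_h) + \gamma^2 a_h^\Delta(\theta,v_h)
= R_1(v_h) + R_2(v_h) + R_3(v_h),
\end{equation*}
where
\begin{align*}
R_1(v_h) &:= a_h^0((P^h u)_t,v_h) - a^0(u_t,v_h),\\
R_2(v_h) &:= \alpha\,(\rho,v_h),\\
R_3(v_h) &:= r_h(u_h;u_h,v_h) - \overline{r}_h(u;P^h u,v_h).
\end{align*}

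\smallskip

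Next I would test with $v_h=\theta$. Lemma~\ref{lem:stab} yields
\begin{equation*}
\tfrac{1}{2}\tfrac{d}{dt}\|\theta\|_{L^2(\Omega)}^2 + c\,\gamma^2 |\theta|_{H^2(\Omega)}^2
\le R_1(\theta)+R_2(\theta)+R_3(\theta),
\end{equation*}
up to equivalence of norms. The terms $R_1$ and $R_2$ are standard: $R_2$ is bounded by Cauchy--Schwarz using \eqref{proj:2}; for $R_1$ one inserts a discontinuous piecewise quadratic approximation $\pi u_t$ of $u_t$, exploits the polynomial consistency of $a_h^0$ (Lemma~\ref{lem:cons}) and the approximation estimate \eqref{proj:2.2} of Lemma~\ref{lm:semi-discr2}, obtaining $|R_1(\theta)|\lesssim h^2 \|\theta\|_{L^2(\Omega)}$. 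The nonlinear residual $R_3$ is precisely the object of Lemma~\ref{lm:pert}, which gives
\begin{equation*}
|R_3(\theta)| \lesssim |\theta|_{H^1(\Omega)}\bigl(\|\theta\|_{L^2(\Omega)} + \|\rho\|_{L^2(\Omega)} + |\theta|_{H^1(\Omega)} + h^2\bigr).
\end{equation*}

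\smallskip

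The main obstacle here is the factor $|\theta|_{H^1(\Omega)}$, which is not directly controlled by the energy LHS. This is exactly where Lemma~\ref{lm:norm} enters: combined with Young's inequality it lets me write $|\theta|_{H^1(\Omega)}^2 \le \varepsilon|\theta|_{H^2(\Omega)}^2 + C_\varepsilon\|\theta\|_{L^2(\Omega)}^2$, absorbing the $H^1$ contribution into the $\gamma^2|\theta|_{H^2(\Omega)}^2$ term on the left-hand side for $\varepsilon$ small enough, while the remainder augments the zero-order term. Collecting everything and using again $\|\rho\|_{L^2(\Omega)}\lesssim h^2$, one arrives at a differential inequality of the form
\begin{equation*}
\tfrac{d}{dt}\|\theta\|_{L^2(\Omega)}^2 + \tfrac{c\gamma^2}{2}|\theta|_{H^2(\Omega)}^2
\lesssim \|\theta\|_{L^2(\Omega)}^2 + h^4.
\end{equation*}
Gr\"onwall's lemma, together with $\|\theta(0)\|_{L^2(\Omega)}\lesssim h^2$, then yields $\|\theta(t)\|_{L^2(\Omega)}\lesssim h^2$ for all $t\in[0,T]$, and the triangle inequality $\|u-u_h\|_{L^2(\Omega)}\le \|\rho\|_{L^2(\Omega)}+\|\theta\|_{L^2(\Omega)}$ concludes the proof.
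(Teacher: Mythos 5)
Your proposal follows essentially the same route as the paper's proof: the same splitting $u-u_h=\rho+\theta$ via the elliptic projection, the same error equation tested with $\theta$, the same use of Lemmas~\ref{lm:semi-discr1}, \ref{lm:semi-discr2}, \ref{lm:pert} and \ref{lm:norm} to absorb the $H^1$ seminorm into the $\gamma^2|\theta|_{H^2}^2$ term, and Gr\"onwall to conclude. The only (harmless) differences are presentational: you spell out the choice of $u_{0,h}$ and the polynomial-consistency treatment of the $a_h^0$ variational crime in $R_1$, which the paper leaves implicit.
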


\begin{proof}
As usual, the argument is based on the following error decomposition 
\begin{equation}
u-u_h= (u- P^h u) + (P^h u - u_h)=: \rho +\theta.
\end{equation}
In view of Lemma~\ref{lm:semi-discr1}, we only need to estimate $\| \theta\|_{L^2(\Omega)}$.
Proceeding as in \cite{Elliott-French:1989}, we first observe that it holds 
\begin{eqnarray}
a_h^0(\theta_t,\chi_h)+ { \gamma^2 } a_h^\Delta(\theta,\chi_h)&=&a_h^0((P^h u - u_h)_t, \chi_h) + { \gamma^2 } a^\Delta_h(P^h u -u_h,\chi_h)\nonumber\\
&=&a_h^0((P^h u)_t,\chi_h) + { \gamma^2 } a_h^\Delta (P^h u, \chi_h) \nonumber\\&&\ - [a_h^0((u_h)_t,\chi_h)+ { \gamma^2 } a^\Delta_h(u_h,\chi_h)]\nonumber\\
&=& a_h^0((P^h u)_t,\chi_h) + { \gamma^2 } a_h^\Delta (P^h u, \chi_h) +  r_h(u_h,u_h;\chi_h).\nonumber
\end{eqnarray}
Using  \eqref{def:bh} and \eqref{def:ellproj} it holds 
\begin{eqnarray}
{ \gamma^2 } a_h^{\Delta}(P^h u,\chi_h) &=& b_h(P^h u,\chi_h) - \overline{r}_h (u;P^h u,\chi_h) - \alpha (P^h u,\chi_h)\nonumber\\
&=& ({ \gamma^2 }\Delta^2 u - \nabla \cdot (\phi^\prime (u) \nabla u) + \alpha u, \chi_h) - \overline{r}_h (u;P^h u,\chi_h) - \alpha (P^h u,\chi_h)\nonumber\\
&=& ({ \gamma^2 }\Delta^2 u - \nabla \cdot (\phi^\prime (u) \nabla u) , \chi_h) - \overline{r}_h (u;P^h u,\chi_h) + \alpha (\rho,\chi_h).\nonumber
\end{eqnarray}
Thus, we have 
\begin{eqnarray}
a_h^0(\theta_t,\chi_h)+ { \gamma^2 } a_h^\Delta(\theta,\chi_h)&=& a_h^0((P^h u )_t, \chi_h)  + ({ \gamma^2 } \Delta^2 u - \nabla \cdot (\phi^\prime (u) \nabla u) , \chi_h)\nonumber\\
&&  + \,  r_h(u_h,u_h;\chi_h)- \overline{r}_h (u;P^h u,\chi_h) + \alpha (\rho,\chi_h)\nonumber\\
&=& - a_h^0(\rho_t, \chi_h) + (u_t + { \gamma^2 } \Delta^2 u - \nabla \cdot (\phi^\prime (u) \nabla u) , \chi_h)\nonumber\\ 
&&  + \,  r_h(u_h,u_h;\chi_h)- \overline{r}_h (u;P^h u,\chi_h) + \alpha (\rho,\chi_h)\nonumber\\
&=&  \alpha (\rho,\chi_h) - a_h^0(\rho_t, \chi_h) + \,  r_h(u_h,u_h;\chi_h)- \overline{r}_h (u;P^h u,\chi_h).\nonumber
\end{eqnarray}
Taking $\chi_h=\theta$ in the above equality we get 
\begin{eqnarray}
a_h^0(\theta_t,\theta)+ { \gamma^2 } a_h^\Delta(\theta,\theta)=  \alpha (\rho,\theta) - a_h^0(\rho_t, \theta) + \,  r_h(u_h,u_h;\theta)- \overline{r}_h (u;P^h u,\theta)
\end{eqnarray}
which, combined with the stability properties of $a_h^\Delta (\cdot,\cdot)$ and $a_h^0 (\cdot,\cdot)$ (see Lemma~\ref{lem:stab}), implies the following crucial inequality
\begin{eqnarray}
\frac{1}{2}\frac{d}{dt} \|\theta\|^2_{L^2(\Omega)} +{ \gamma^2 } \vert \theta \vert^2_{H^2(\Omega)} \lesssim   (\alpha \|\rho\|_{L^2(\Omega)} + \| \rho_t \|_{L^2(\Omega)} )\| \theta\|_{L^2(\Omega)} +  r_h(u_h,u_h;\theta)- \overline{r}_h (u;P^h u,\theta).\nonumber
\end{eqnarray}
Employing Lemmas~\ref{lm:semi-discr1},~\ref{lm:semi-discr2},~\ref{lm:pert} and~\ref{lm:norm} we obtain 
\begin{equation}
\frac{1}{2}\frac{d}{dt} \|\theta\|^2_{L^2(\Omega)} + { \gamma^2 } \vert \theta \vert^2_{H^2(\Omega)} \lesssim  h^4 + \| \theta \|^2_{L^2(\Omega)}
\end{equation}
which, combined with Gronwall's lemma, yields the required estimate for $\| \theta \|_{L^2(\Omega)}$.

\end{proof}

% -------------------------------------------------------------------------------
\section{Numerical results}
\label{sec:num}
% -------------------------------------------------------------------------------

The time discretization is performed by the Backward Euler method. The resulting
non-linear system (\ref{fullydisc}) at each time step is solved by the Newton method, using
the $l^2$ norm of the relative residual as a stopping criterion. The tolerance for
convergence is $1e-6$. For the simulations, we have used a Matlab code and a Fortran90
parallel code based on the PETSc library. The parallel tests were run on the 
FERMI linux cluster of the CINECA consortium (www.cineca.it).

\subsection{Test 1: convergence to exact solution}
\begin{table}[htbp]
\caption{Test 1: \noindent $H^2$, $H^1$ and $L^2$ errors and convergence rates $\alpha$ computed on four quadrilateral 
meshes discretizing the unit square.}
%by solving the Cahn-Hilliard equation with forcing term $f$ obtained imposing as exact solution
%$u(x,y,t)=t\cos(2\pi x)\cos(2\pi y)$
%.}
\begin{center}
\begin{tabular}{c|cc|cc|cc}
\hline
$h$    &$|u_h-u|_{H^2(\Omega)}$  &$\alpha$  &$|u_h-u|_{H^1(\Omega)}$  &$\alpha$  &$||u_h-u||_{L^2(\Omega)}$  &$\alpha$\\
\hline
1/16   &1.35e-1          &--        &8.57e-2          &--        &8.65e-2            &--\\
1/32   &5.86e-2          &1.20      &2.20e-2          &1.96      &2.20e-2            &1.97\\
1/64   &2.79e-2          &1.07      &5.53e-3          &1.99      &5.52e-3            &1.99\\
1/128  &1.38e-2          &1.02      &1.37e-3          &2.01      &1.37e-3            &2.01\\
\hline
\end{tabular}
\label{tab_test1}
\end{center}
\end{table}

In this test, we study the convergence of our VEM discretization applied to the Cahn-Hilliard equation
with a forcing term $f$ obtained imposing as exact solution
$u(x,y,t)=t\cos(2\pi x)\cos(2\pi y)$. The parameter $\gamma$ is set to $1/10$ and 
the time step size $\Delta t$ is $1e-7$.
The $H^2$, $H^1$ and $L^2$ errors are computed at $t=0.1$ on four quadrilateral 
meshes discretizing the unit square. 

The results reported in Table~\ref{tab_test1} show that in the $L^2$ norm the VEM method 
converges with order 2, as predicted by Theorem~\ref{conv_theo}. In the $H^2$ and $H^1$ norms, 
the method converges with order 1 and 2 respectively, as can be expected according to the FEM theory and the approximation properties of the adopted virtual space. 

\subsection{Test 2: evolution of an ellipse}
In this test, we consider the Cahn-Hilliard
equation on the unit square with $ \gamma = 1/100$. The time step size $\Delta t$ is $5e-5$. 
The initial datum
$u_0$  is a piecewise constant function whose jump-set is an ellipse:
\[
u_0(x,y)=
\left\{
\begin{array}{ll}
0.95 & \mbox{ if } 9(x-0.5)^2 + (y-0.5)^2 < 1/9,\\
-0.95 & \mbox{ otherwise }.
\end{array}
\right.
\]
Both a structured quadrilateral mesh and an unstructured triangular mesh (generated with the 
mesh generator of the Matlab PDEToolbox) are considered, with 49923 and
13167 dof, respectively.
As expected the initial datum $u_0$ with the ellipse-shaped jump-set evolves to a
steady state exhibiting a circular interface; see Figs~\ref{ellip_quad} and
\ref{ellip_tri}. Thereafter, no motion will
occur as the interface has constant curvature.
%%%%%%%%%
\begin{figure}[t]
\begin{center}
\subfigure[Quadrilateral mesh of $16384=128\times 128$ elements  ($49923$ degrees of freedom). \label{ellip_quad}]{
\includegraphics[width=0.32\textwidth]{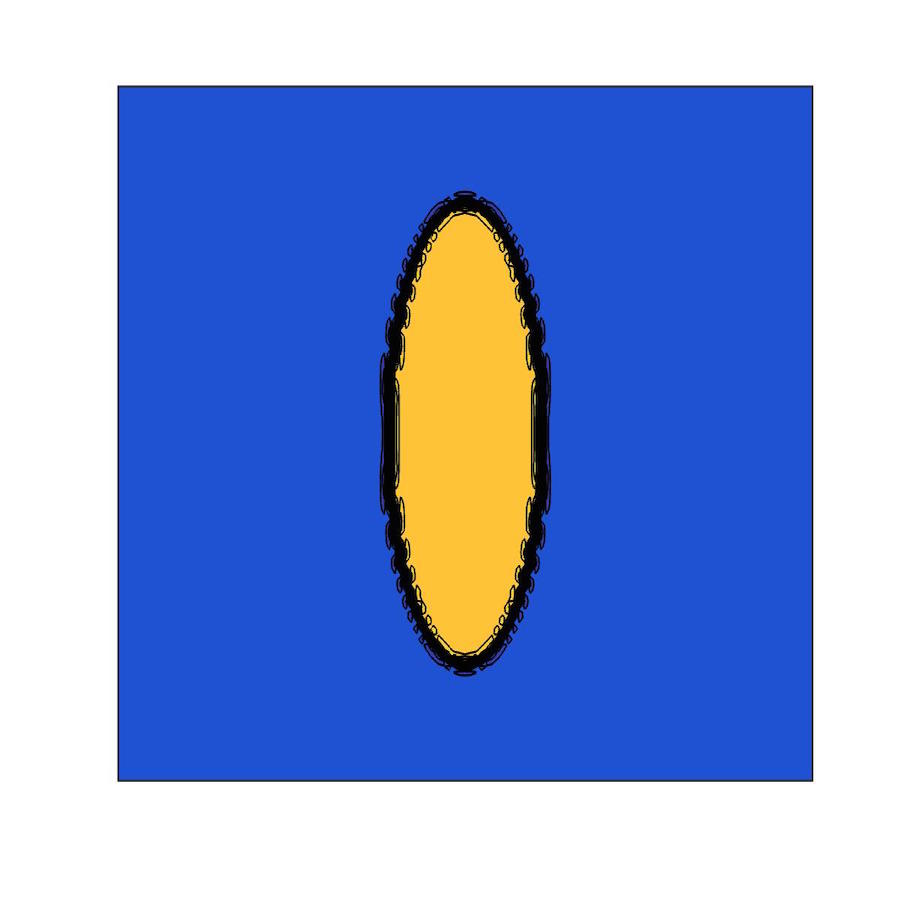}
\includegraphics[width=0.32\textwidth]{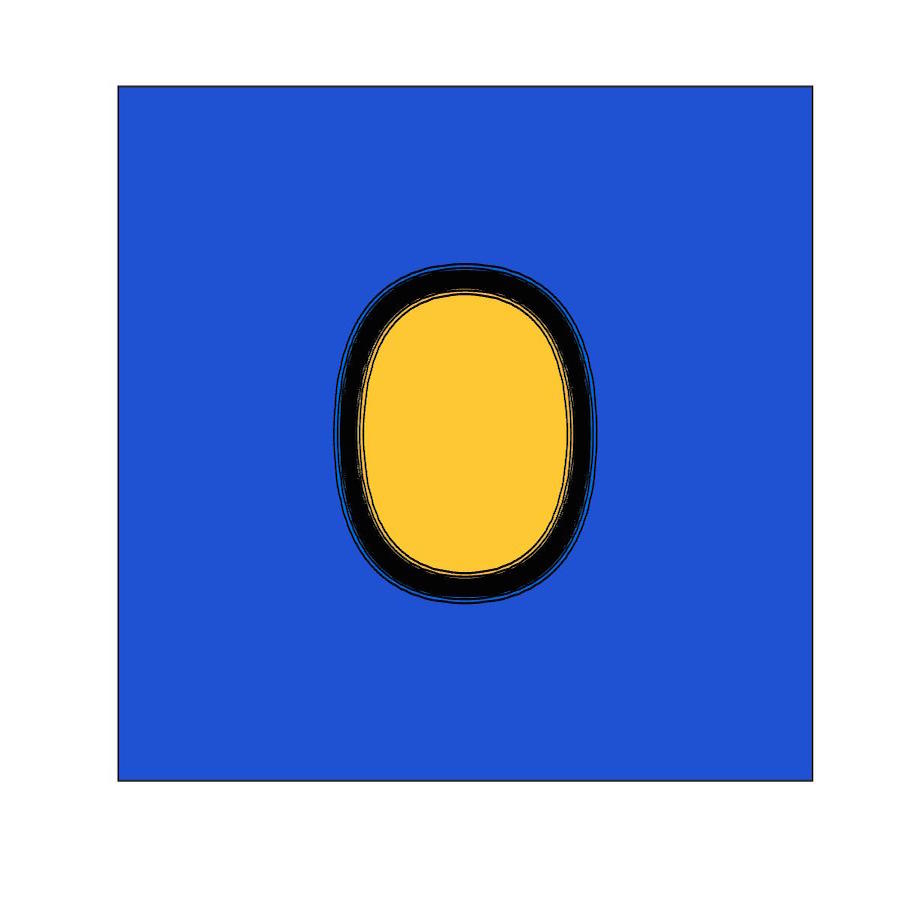}
\includegraphics[width=0.32\textwidth]{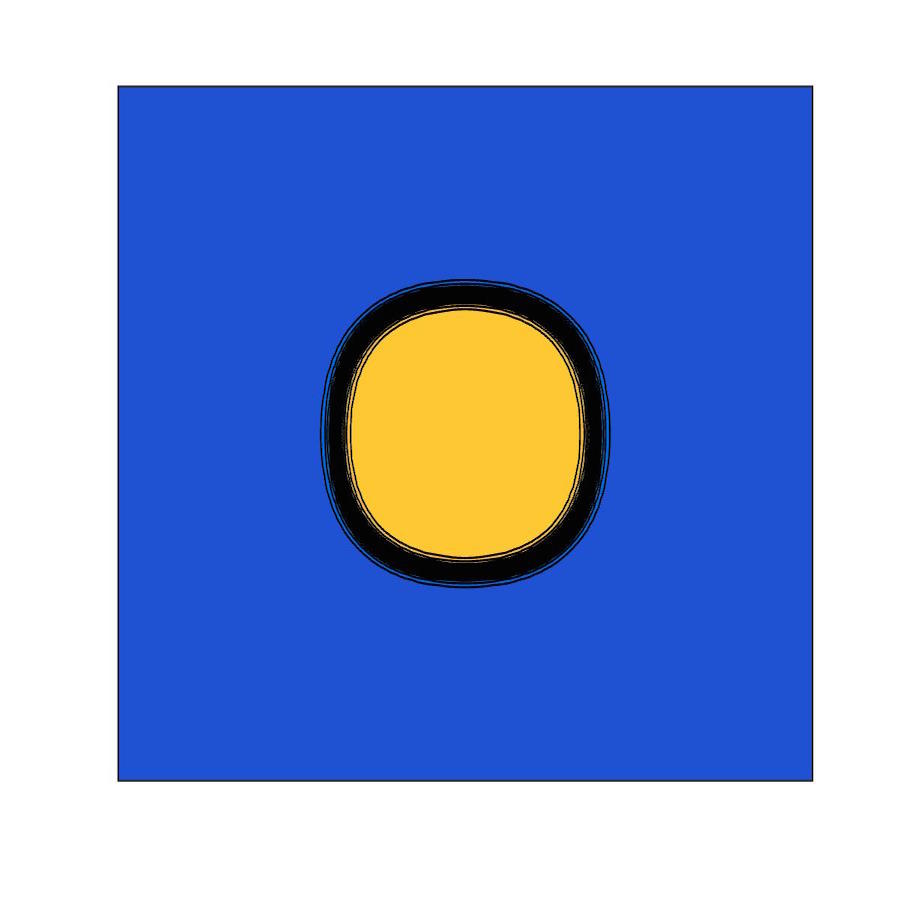}}
\subfigure[Triangular mesh of $8576$ elements
($13167$ degrees of freedom). \label{ellip_tri}]{
\includegraphics[width=0.32\textwidth]{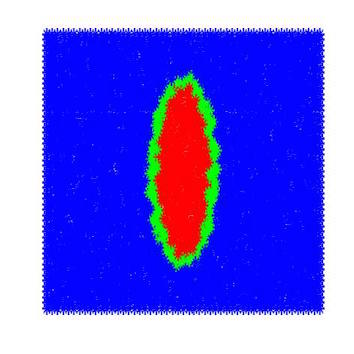}
\includegraphics[width=0.32\textwidth]{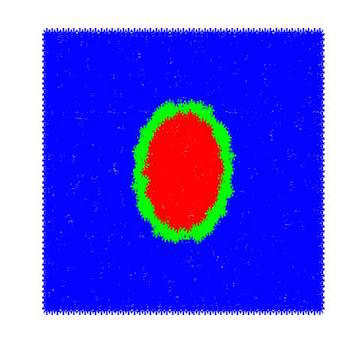}
\includegraphics[width=0.32\textwidth]{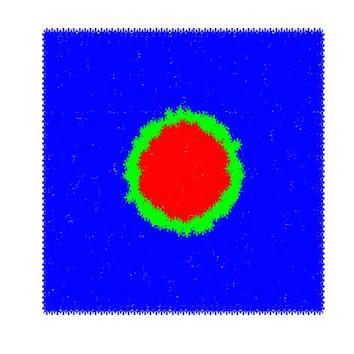}}
\caption{Test 2: evolution of an ellipse at three temporal frames  ($t=0, 0.5, 1$).}
\end{center}
\end{figure}
%
%
%%%%%%%%%
%\begin{figure}[t]
%\begin{center}
%\includegraphics[width=0.32\textwidth]{ellip_quad128_frame_0.jpg}
%\includegraphics[width=0.32\textwidth]{ellip_quad128_frame_6.jpg}
%\includegraphics[width=0.32\textwidth]{ellip_quad128_frame_8.jpg}
%\caption{Test 2: evolution of an ellipse at three temporal frames. Quadrilateral mesh of $16384=128\times 128$ elements
%discretizing the unit square ($49923$ degrees of freedom).}
%\label{ellip_quad}
%\end{center}
%\end{figure}
%
%
%\begin{figure}[t]
%\begin{center}
%\includegraphics[width=0.32\textwidth]{ellip_tri4m_frame_1.jpg}
%\includegraphics[width=0.32\textwidth]{ellip_tri4m_frame_9.jpg}
%\includegraphics[width=0.32\textwidth]{ellip_tri4m_1s.jpg}\\
%\caption{Test 2: evolution of an ellipse at three temporal frames. Triangular mesh of $8576$ elements
%discretizing the unit square ($13167$ degrees of freedom).}
%\label{ellip_tri}
%\end{center}
%\end{figure}

\subsection{Test 3: evolution of a cross}
We use here the same domain and the parameters as in Test 2. The initial datum $u_0$ is
a piecewise constant function whose jump-set has the shape of a cross; 
see Figs.~\ref{cross_quad},~\ref{cross_tri} and~\ref{cross_paul} (t = 0).
The same quadrilateral and triangular meshes of the Test 2 are considered,
with 49923 and 13167 dof, respectively, and a Voronoi polygonal mesh 
(including quadrilaterals, pentagons and hexagons, see Fig. \ref{paul_example} as example) with 59490 dof.
As in the ellipse example, the initial datum $u_0$ with a cross-shaped
jump-set evolves to a steady state exhibiting a circular interface, see
Figs.~\ref{cross_quad},~\ref{cross_tri} and~\ref{cross_paul}.
%%%
%%%%%%%%%%%
\begin{figure}[t]
\begin{center}
\includegraphics[width=4cm]{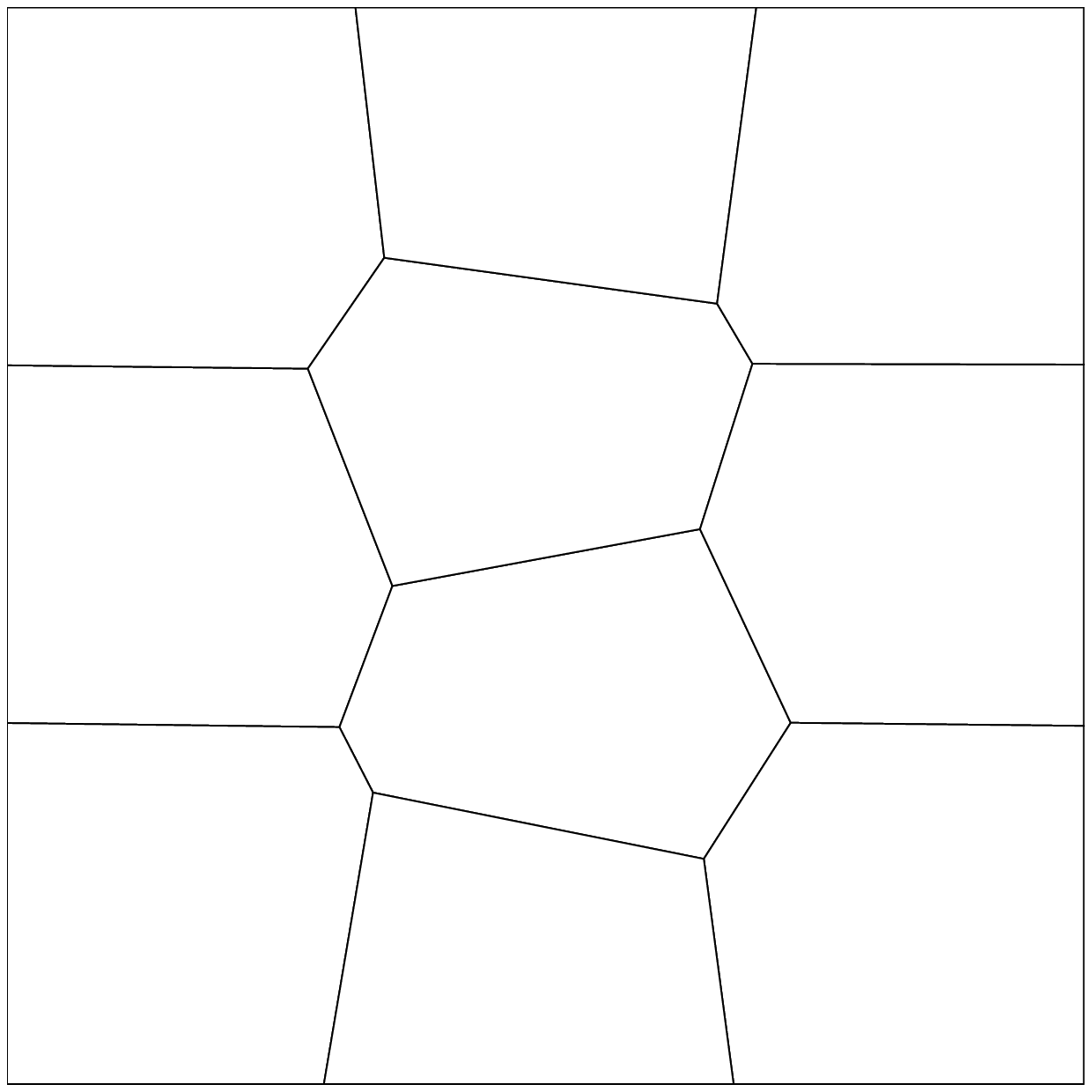}
\includegraphics[width=4.4cm]{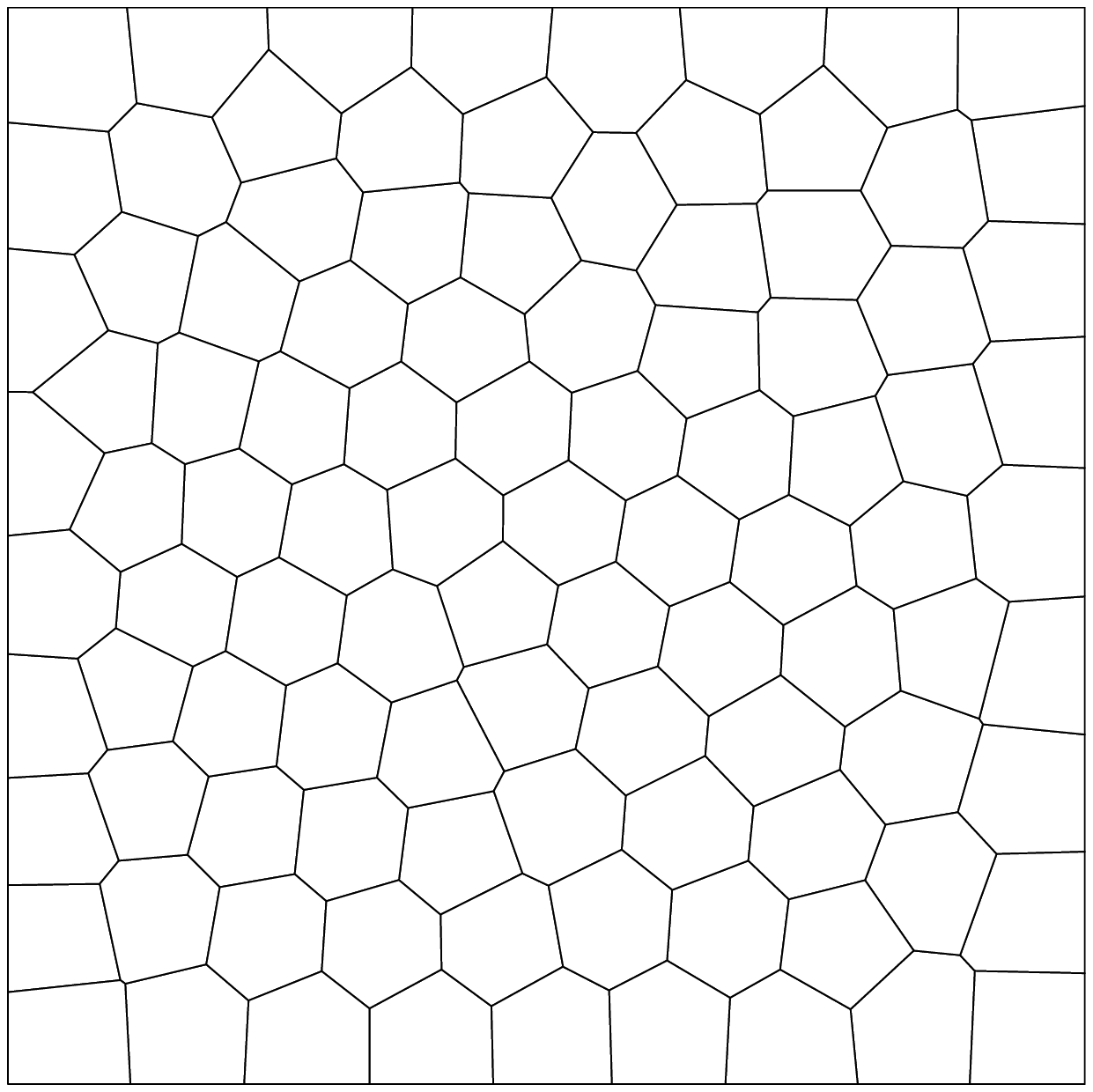}\\
\caption{Examples of Voronoi polygonal meshes (quadrilaterals, pentagons, hexagons) 
with 10 (left) and 100 (right) elements.}
\label{paul_example}
\end{center}
\end{figure}
%%%%%%

%%%%%%%%%
\begin{figure}[t]
\begin{center}
\subfigure[Quadrilateral mesh of $16384=128\times 128$ elements ($49923$ degrees of freedom).\label{cross_quad}]{
\includegraphics[width=0.32\textwidth]{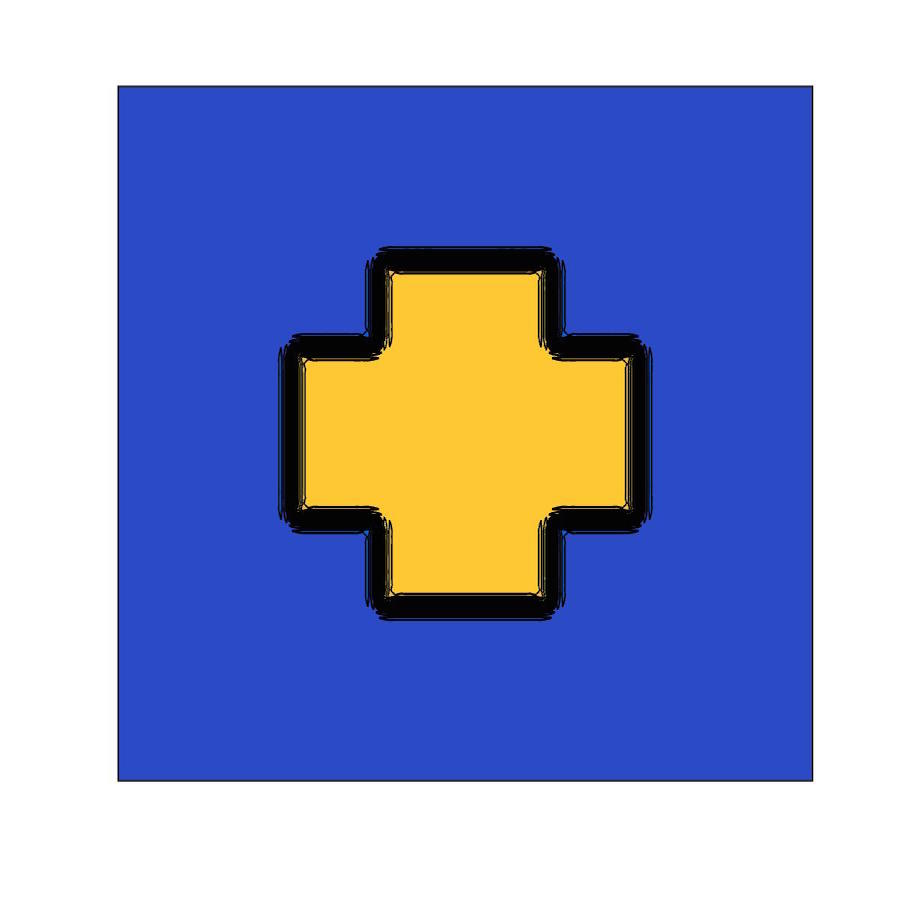}
\includegraphics[width=0.32\textwidth]{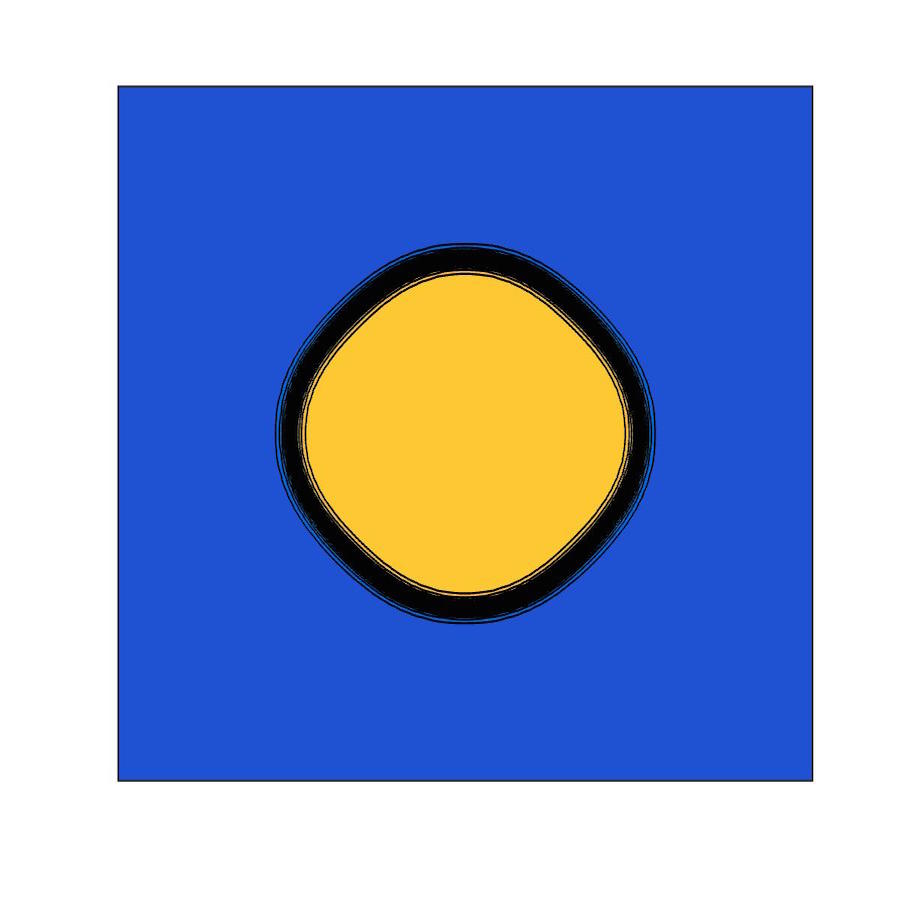}
\includegraphics[width=0.32\textwidth]{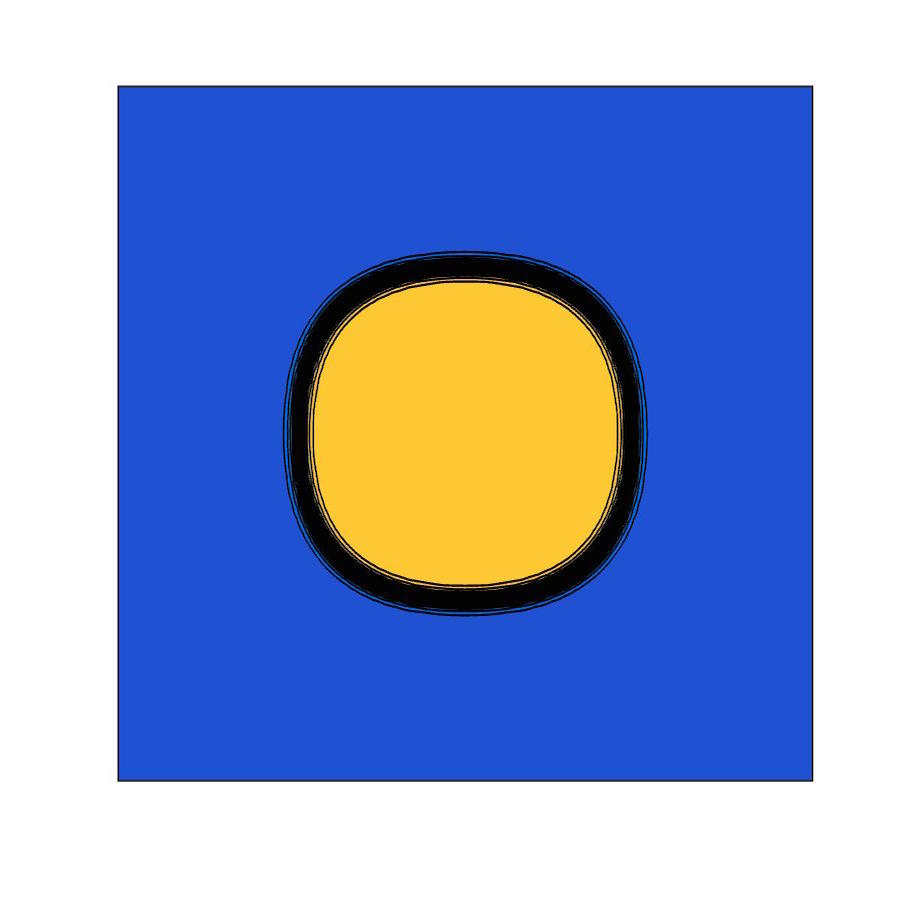}
}
\subfigure[Triangular mesh of $8576$ elements
discretizing the unit square ($13167$ degrees of freedom). \label{cross_tri}]{
\includegraphics[width=0.32\textwidth]{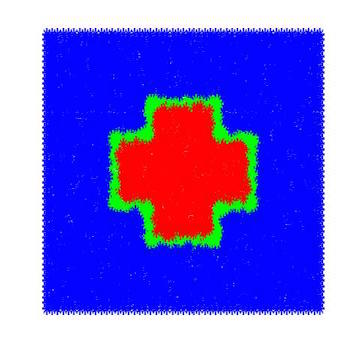}
\includegraphics[width=0.32\textwidth]{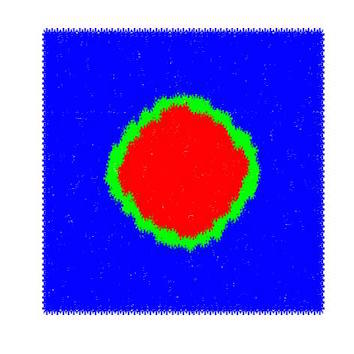}
\includegraphics[width=0.32\textwidth]{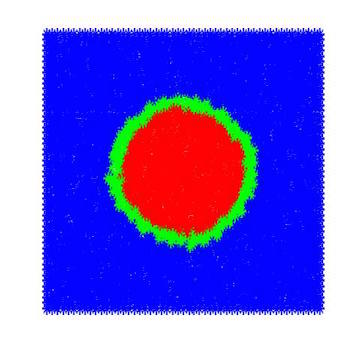}
}
\subfigure[Voronoi polygonal mesh (quadrilaterals, pentagons, hexagons) of $10000$ elements ($59490$ degrees of freedom). \label{cross_paul}]{
\includegraphics[width=0.32\textwidth]{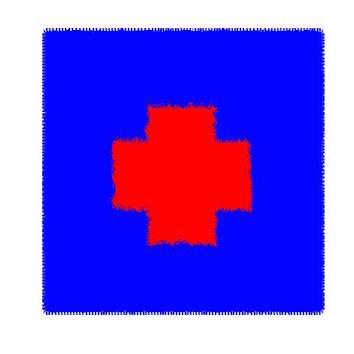}
\includegraphics[width=0.32\textwidth]{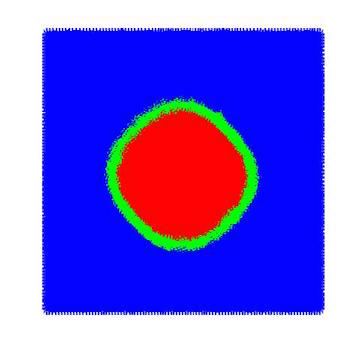}
\includegraphics[width=0.32\textwidth]{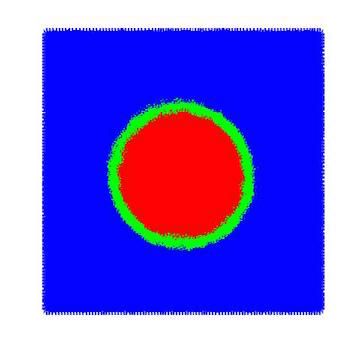}
}
\caption{Test 3: evolution of a cross at three temporal frames  ($t=0, 0.05, 1$). }
\end{center}
\end{figure}
%%%
%%%
%\begin{figure}[t]
%\begin{center}
%\includegraphics[width=0.32\textwidth]{cross_quad128_frame_1.jpg}
%\includegraphics[width=0.32\textwidth]{cross_quad128_frame_6.jpg}
%\includegraphics[width=0.32\textwidth]{cross_quad128_frame_9.jpg}
%\caption{Test 3: evolution of a cross at three temporal frames. Quadrilateral mesh of $16384=128\times 128$ elements
%discretizing the unit square ($49923$ degrees of freedom).}
%\label{cross_quad}
%\end{center}
%\end{figure}
%%%%%%%
%\begin{figure}[t]
%\begin{center}
%\includegraphics[width=0.32\textwidth]{cross_tri4m_frame_1.jpg}
%\includegraphics[width=0.32\textwidth]{cross_tri4m_005s.jpg}
%\includegraphics[width=0.32\textwidth]{cross_tri4m_1s.jpg}
%\caption{Test 3: evolution of a cross at three temporal frames. Triangular mesh of $8576$ elements
%discretizing the unit square ($13167$ degrees of freedom).}
%\label{cross_tri}
%\end{center}
%\end{figure}
%%%%%%%%%%
%\begin{figure}[t]
%\begin{center}
%\includegraphics[width=0.32\textwidth]{cross_paul10m_frame_1.eps}
%\includegraphics[width=0.32\textwidth]{cross_paul10m_frame_2.eps}
%\includegraphics[width=0.32\textwidth]{cross_paul10m_frame_3.eps}
%\caption{Test 3: evolution of a cross at three temporal frames. Voronoi polygonal mesh 
%(quadrilaterals, pentagons, hexagons) of $10000$ elements
%discretizing the unit square ($59490$ degrees of freedom).}
%\label{cross_paul}
%\end{center}
%\end{figure}

\subsection{Test 4: spinoidal decomposition}
Spinodal decomposition is a physical phenomenon consisting of
the separation of a mixture of two or more components to bulk regions of each.
It occurs when a high-temperature mixture of different components is rapidly cooled. 
To model this separation the initial datum $u_0$ is chosen to be a 
uniformly distributed random perturbation between -1 and 1, 
see Figs.~\ref{spin_quad}, \ref{spin_tri}, \ref{spin_paul} (t=0). The same parameters as in Test 2 are used.
We remark that the three initial random configurations are different.
We consider a quadrilateral mesh with 49923 dof (Fig.~\ref{spin_quad}), 
a triangular mesh with 13167 dof (Fig.~\ref{spin_tri}) and a polygonal mesh with 59590 dof (Fig.~\ref{spin_paul}).
The separation of the two components into bulk regions can be appreciated quite early, 
see Figs.~\ref{spin_quad}, \ref{spin_tri}, \ref{spin_paul} (t=0.01). 
This initial separation happens over a very small time-scale compared to
the motion thereafter. Then, the bulk regions begin to move more slowly, and
separation will continue until the interfaces develop a constant curvature.
In the quadrilateral (Fig.~\ref{spin_quad}) and triangular (Fig.~\ref{spin_tri}) mesh cases,
the final equilibrium configuration is
the square divided into two rectangles, while in the polygonal (Fig.~\ref{spin_paul}) mesh case
the final equilibrium configuration is clearly a circle. The fact that different final configurations are obtained starting from different initial random configurations is consistent with the results in \cite{Gomez-etal:2008}.

%%%%%%%%%
\begin{figure}[t]
\begin{center}
\subfigure[Quadrilateral mesh of $16384=128\times 128$ elements ($49923$ degrees of freedom).\label{spin_quad}]{
\includegraphics[width=0.32\textwidth]{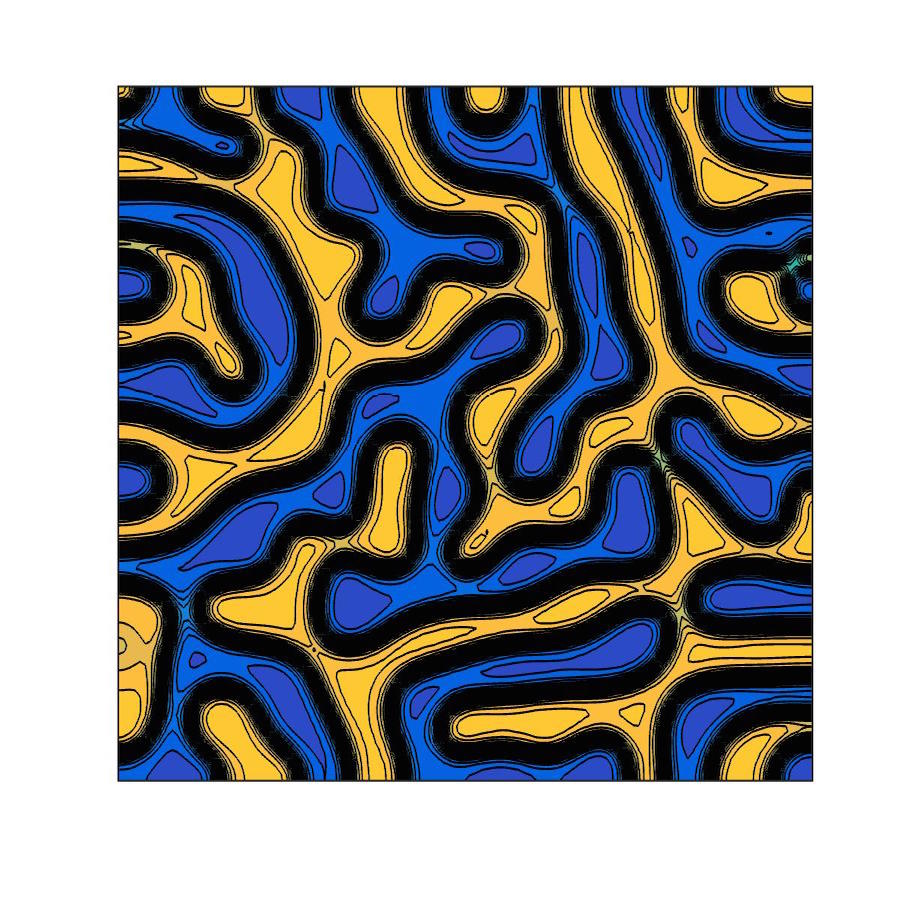}
\includegraphics[width=0.32\textwidth]{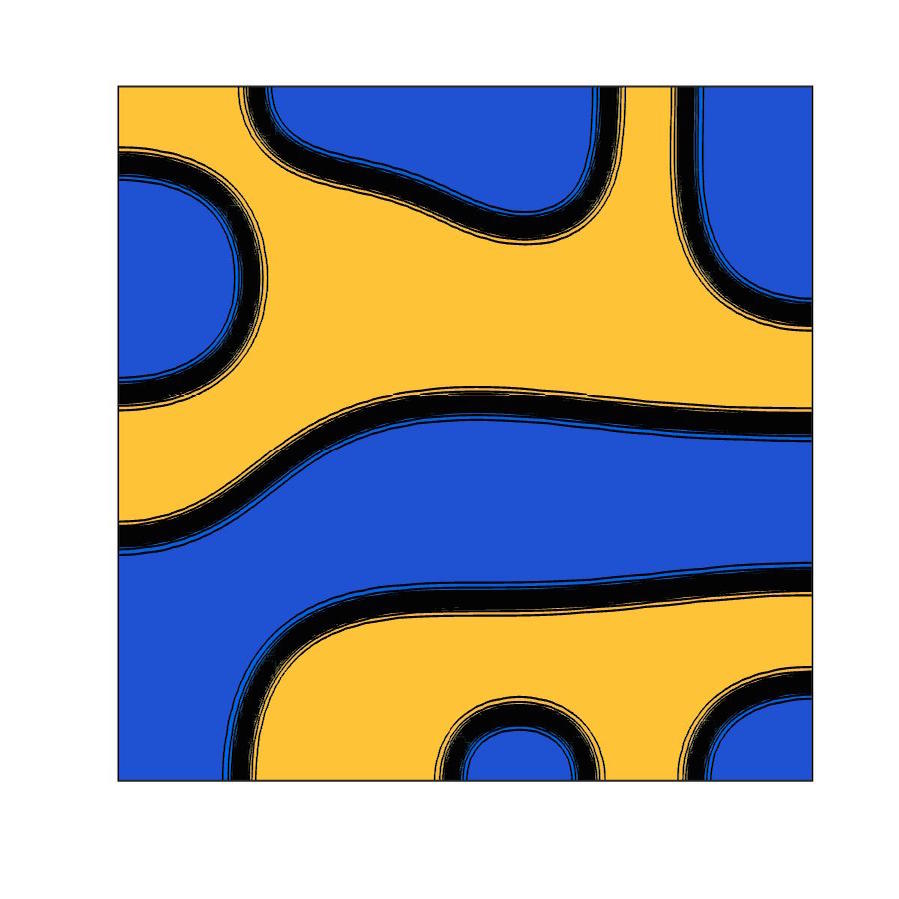}
\includegraphics[width=0.32\textwidth]{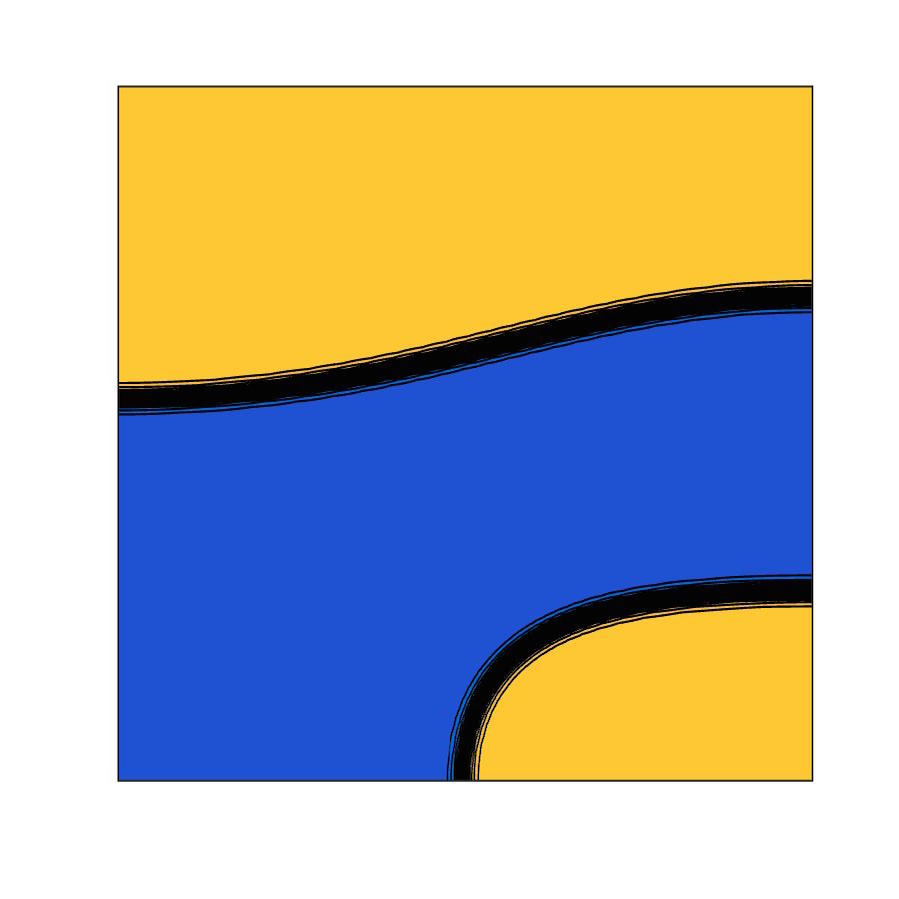}
}
\subfigure[Triangular mesh of $8576$ elements
 ($13167$ degrees of freedom). \label{spin_tri}]{
\includegraphics[width=0.32\textwidth]{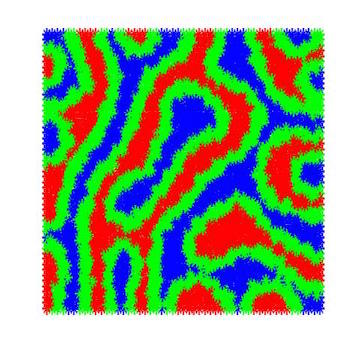}
\includegraphics[width=0.32\textwidth]{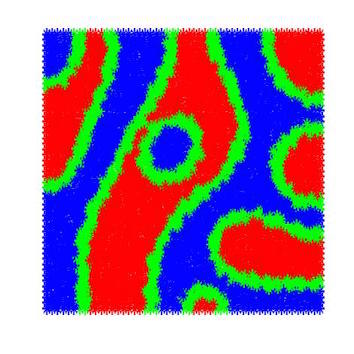}
\includegraphics[width=0.32\textwidth]{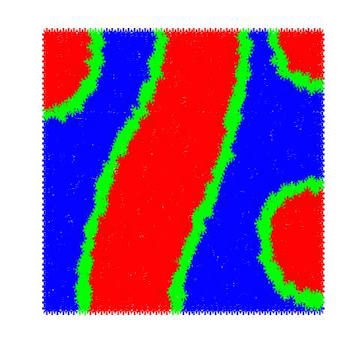}
}
\subfigure[Voronoi polygonal mesh (quadrilaterals, pentagons, hexagons) of $10000$ elements ($59490$ degrees of freedom). \label{spin_paul}]{
\includegraphics[width=0.32\textwidth]{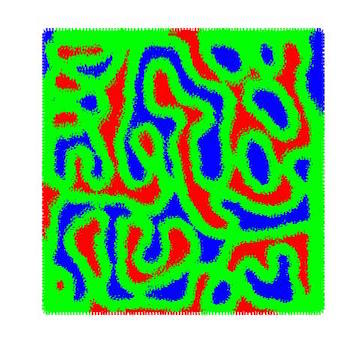}
\includegraphics[width=0.32\textwidth]{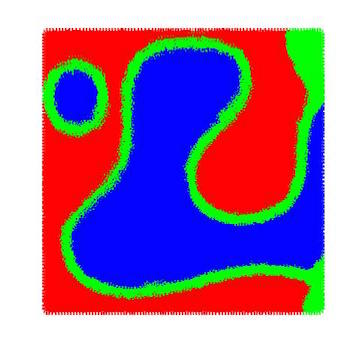}
\includegraphics[width=0.32\textwidth]{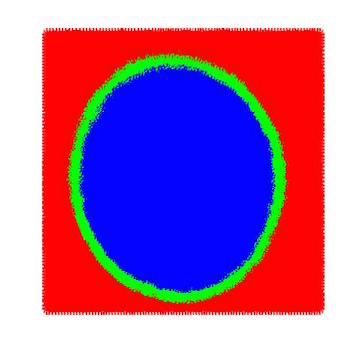}
}
\caption{Test 4: spinoidal decomposition at three temporal frames  
($t=0.01, 0.05, 5$ for the quadrilateral and Voronoi polygonal meshes,
$t=0.075, 0.25, 1.25$ for the triangular mesh). 
}
\end{center}
\end{figure}
%%%

%%%%%%%%%%%%%
%\begin{figure}[t]
%\begin{center}
%\includegraphics[width=0.32\textwidth]{rand_quad128_frame_1.jpg}
%\includegraphics[width=0.32\textwidth]{rand_quad128_frame_5.jpg}
%\includegraphics[width=0.32\textwidth]{rand_quad128_frame_8.jpg}
%\caption{Test 4: spinoidal decomposition at three temporal frames. 
%Quadrilateral mesh of $16384=128\times 128$ elements
%discretizing the unit square ($49923$ degrees of freedom).}
%\label{spin_quad}
%\end{center}
%\end{figure}
%%%%%%%%%%%%%%%
%\begin{figure}[t]
%\begin{center}
%\includegraphics[width=0.32\textwidth]{rand_tri4m_frame_2.jpg}
%\includegraphics[width=0.32\textwidth]{rand_tri4m_frame_4.jpg}
%\includegraphics[width=0.32\textwidth]{rand_tri4m_frame_7.jpg}\\
%\caption{Test 4: spinoidal decomposition at three temporal frames.
%Triangular mesh of $8576$ elements
%discretizing the unit square ($13167$ degrees of freedom).}
%\label{spin_tri}
%\end{center}
%\end{figure}
%%%%%%
%\begin{figure}[t]
%\begin{center}
%\includegraphics[width=0.32\textwidth]{random_paul10m_001s.eps}
%\includegraphics[width=0.32\textwidth]{random_paul10m_05s.eps}
%\includegraphics[width=0.32\textwidth]{random_paul10m_5s.eps}
%\caption{Test 4: spinoidal decomposition at three temporal frames.
%Voronoi polygonal mesh
%(quadrilaterals, pentagons, hexagons) of $10000$ elements
%discretizing the unit square. $dof=59490$.}
%\label{spin_paul}
%\end{center}
%\end{figure}

\FloatBarrier

%\bibliographystyle{abbrv}
%\bibliography{biblioCH}

%%%%%%%%%%%%%%%%%%%%%%%%%%%%%%%%%%%%%%%%%%%%%%
\end{document}